\DeclareMathOperator{\RE}{Re}
\numberwithin{equation}{section}
\newtheorem{theorem}{Theorem}[section]
\newtheorem{lemma}[theorem]{Lemma}
\newtheorem{corollary}[theorem]{Corollary}
\theoremstyle{remark}
\begin{document}
\title[generalized Zalcman  conjecture ]{generalized Zalcman  conjecture for some classes of  analytic functions\boldmath}

\author[V. Ravichandran]{V. Ravichandran}

\address{Department of Mathematics, University of Delhi,
Delhi--110 007, India}
\email{vravi68@gmail.com, vravi@maths.du.ac.in}

\author[S. Verma]{Shelly Verma}

\address{Department of Mathematics, University of Delhi,
Delhi--110 007, India}
\email{jmdsv.maths@gmail.com}

\begin{abstract}
For  functions $f(z)= z+ a_2 z^2 + a_3 z^3 + \cdots$ in various subclasses of  normalized analytic functions, we consider the problem of estimating the generalized Zalcman coefficient functional $\phi(f,n,m;\lambda):=|\lambda a_n a_m -a_{n+m-1}|$.  For all real parameters $\lambda$ and $ \beta<1$, we provide the sharp upper bound of $\phi(f,n,m;\lambda)$ for functions  $f$ satisfying $\RE f'(z) > \beta$ and hence settle the open problem  of estimating $\phi(f,n,m;\lambda)$ recently proposed by  Agrawal and Sahoo  [S. Agrawal and S. K. Sahoo, On coefficient functionals associated with the Zalcman conjecture, arXiv preprint, 2016]. For all real values of $\lambda$, the estimations of $\phi(f,n,m;\lambda)$ are provided for  starlike and convex functions of order $\alpha$ $(\alpha <1)$ which   are sharp for  $\lambda  \leq 0$ or for certain positive values of $\lambda$. Moreover, for certain  positive  $\lambda$, the sharp estimation of $\phi(f,n,m;\lambda)$  is given when  $f$ is   a typically real  function or a univalent function with real coefficients or is in some subclasses of  close-to-convex functions.

\end{abstract}

\keywords{Univalent functions; starlike functions; convex functions;  typically real  functions; coefficient bounds; closed  convex hull; close-to-convex functions;  coefficient functional.}

\subjclass[2010]{30C45, 30C50, 30C80}

\thanks{The second  author is supported by a grant from  National Board for Higher Mathematics, Mumbai.}
\maketitle

\section{Introduction and  Preliminaries}
Let $\mathcal{A}$ be the class of all normalized analytic  functions  of the form $f(z)= z+ a_2 z^2 + a_3 z^3 + \cdots$ defined on the open unit disc $\mathbb{D}$. The subclass of $\mathcal{A}$ consisting of univalent functions is denoted by $\mathcal{S}$. Let $\mathcal{S}_{\mathbb{R}}$ be the class of all functions in $\mathcal{S}$ with real coefficients.  For $ \alpha <1$, we denote by $\mathcal{S}^*(\alpha)$  and $\mathcal{K}(\alpha)$,  the classes of functions $f \in \mathcal{A}$ satisfying $\RE \big(zf'(z)/f(z)\big) > \alpha$ and $\RE \big( 1+zf''(z)/f'(z)\big) > \alpha$  respectively. For $0 \leq \alpha <1$, these classes  are subclasses of $\mathcal{S}$ and  were first  introduced   by Robertson  \cite{MR1503286} in 1936. Later, for  all  $\alpha <1$, these  classes were considered  in \cite{MR2118626, MR0338337}. The classes $\mathcal{S}^*:=\mathcal{S}^*(0)$ and $\mathcal{K}:= \mathcal{K}(0)$ represent the classes of starlike and convex functions respectively. We denote the closed convex hulls  of $\mathcal{S}^*(\alpha)$ and $\mathcal{K}(\alpha)$ by $H\mathcal{S}^*(\alpha)$ and $H\mathcal{K}(\alpha)$ respectively. The class of typically real functions, denoted by $T$,  consists of all  functions in $\mathcal{A}$ which have real values on the  real axis and non-real values elsewhere. Denote by $\mathcal{P}$, the class of all analytic functions $p(z)= 1+ c_1 z+c_2 z^2 + \cdots$  defined on $\mathbb{D}$  such that  $\RE p(z)>0$. The class $\mathcal{P}_{\mathbb{R}}$ consists of all functions in  $\mathcal{P}$ with real coefficients.

 In 1916, Bieberbach conjectured the inequality $|a_n| \leq n$ for $f \in \mathcal{S}$. Since then, several attempts  were made to prove the Bieberbach conjecture  which was finally  proved by de Branges in 1985. In 1960, as an approach to prove the Bieberbach conjecture,  Lawrence Zalcman conjectured that $|a_n^2-a_{2n-1}| \leq (n-1)^2$ $(n \geq 2)$ for $f \in \mathcal{S}$.  This led to several works related to Zalcman conjecture and its generalized version $ |\lambda a_n^2-a_{2n-1}|\leq \lambda n^2-2n+1$ $(\lambda \geq 0)$ for various subclasses of $\mathcal{S}$ \cite{ MR964850, MR3284304, efraimidis2014generalized, MR3542050, MR824446, MR3467599} but the Zalcman conjecture remained open for many years for the class $\mathcal{S}$. Recently, Krushkal \cite{krushkal2014short} proved the conjecture  for the class $\mathcal{S}$ by using complex geometry of the universal Teichm\"{u}ller spaces.

 In 1999, Ma \cite{MR1694809} proposed a generalized Zalcman conjecture for $f \in \mathcal{S}$  that $$|a_n a_m-a_{n+m-1}| \leq (n-1)(m-1)$$ which is still an open problem, however he proved it for the classes $\mathcal{S}^*$ and $\mathcal{S}_{\mathbb{R}}$. For $\lambda \in \mathbb{R}$, let  $\phi(f,n,m;\lambda):= |\lambda a_n a_m-a_{n+m-1}|$ denote the generalized Zalcman coefficient functional over $\mathcal{A}$. For  $\beta<1$, the class $\mathcal{C}(\beta)$ of close-to-convex functions of order $\beta$ consists of $f \in  \mathcal{A}$ such that $\RE \big(z f'(z)/ \big( e^{i\theta} g(z)\big)\big) > \beta$  for some $g \in \mathcal{S}^*$ and  $\theta \in \mathbb{R}$. For $0 \leq \beta < 1$, the class $\mathcal{C}(\beta)$  is a subclass of $\mathcal{S}$ and was considered in \cite{MR0160890} in a more general form.  The class of close-to-convex functions is denoted by $\mathcal{C}:=\mathcal{C}(0)$, for details, see  \cite{MR704184}.  Let $\mathcal{F}_1(\beta)$ and $\mathcal{F}_2(\beta)$ be the subclasses of $\mathcal{C}(\beta)$ $(\beta<1)$ corresponding to  $\theta=0$ and the starlike functions $g(z)=z/(1-z)$ and $g(z)=z/(1-z^2)$ respectively. For $\beta <1$, let $\mathcal{R}(\beta)$ denote the class of functions $f \in \mathcal{A}$ satisfying $\RE f'(z) > \beta$.   For $0 \leq \beta <1$, $\mathcal{R}(\beta)$ is a subclass of $\mathcal{S}$ and  was  first introduced in  \cite{MR0338338}. Here, we are interested in $\mathcal{R}(\beta)$ for all values of $\beta$ $(\beta <1)$. Recently,  for some  positive values of $\lambda$ and  $0 \leq \beta <1$, Agrawal and Sahoo \cite{agrawal2016coefficient} gave the sharp estimation of $\phi(f,n,m;\lambda)$ for the classes $\mathcal{R}(\beta)$ and $H\mathcal{K}$.
% This class  was introduced in \cite{MR0160890} and further studied for its closed convex hull in \cite{MR0338337}.

 In this paper, for all real values of  $\lambda$, we give  the sharp estimation of  $\phi(f,n,m;\lambda)$ for $f \in \mathcal{R}(\beta)$ $(\beta<1)$. Also, for $f \in \mathcal{S}^*(\alpha)$ and $f \in \mathcal{K}(\alpha)$ $( \alpha <1)$, the estimations of $\phi(f,n,m;\lambda)$  are given for all real values of $\lambda$ which are sharp  when $\lambda \leq 0$ or when $\lambda$ is taking certain  positive values. Moreover, for certain positive values of $\lambda$, the sharp estimations of $\phi(f,n,m;\lambda)$ are provided for the classes $T$, $\mathcal{S}_{\mathbb{R}}$,  $\mathcal{F}_1(\beta)$ and $\mathcal{F}_2(\beta)$ $(\beta <1)$.

 We prove our results either by applying the  well-known estimation of  $|\lambda c_n c_m - c_{n+m}|$   for  $p(z)=1+\sum_{n=1}^{\infty} c_n z^n \in \mathcal{P}$  or by applying some characterization of functions in the class $\mathcal{P}$ and that of typically real functions in terms of some positive semi-definite Hermitian form, see \cite{MR760980, MR3348983}.  Earlier,  such  characterization of functions with positive real part in terms of some  positive semi-definite Hermitian form \cite{MR760980} was used in \cite{MR1387562,MR2055766,MR3348983}. It should be pointed out that  in the literature, for various subclasses of $\mathcal{S}$ which are invariant under rotations, the estimation of $\phi(f,n,n;\lambda)$ is usually obtained by using the fact that the expression $\phi(f,n,n;\lambda)$ is invariant under rotations and by an application of the Cauchy-Schwarz inequality which requires $\lambda$ to be non-negative. However,  we are able to give the sharp estimation of $\phi(f,n,m;\lambda)$   for various subclasses of $\mathcal{A}$ when $\lambda \leq 0$. Moreover, for certain positive  $\lambda$, our technique is giving the estimation of $\phi(f,n,m;\lambda)$  when $f$  is in some subclasses of $\mathcal{A}$  which are not necessarily invariant under rotations.  We need the following lemmas to prove our results.
\begin{lemma}\cite[Lemma 2.3,\ p.\ 507]{MR3348983} \label{p4lem1}If $p(z)= 1 + \sum_{k=1}^{\infty} c_k z^k \in \mathcal{P}$, then for all $n, m \in \mathbb{N}$, $$|\mu c_n c_m - c_{n+m}| \leq \begin{cases} 2, &\text{$ 0 \leq \mu \leq 1$;}\\
2|2 \mu -1|, &\text{elsewhere.}
\end{cases}  $$
The result is sharp.
\end{lemma}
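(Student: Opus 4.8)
The plan is to reduce the estimate to a moment inequality via the Herglotz--Carath\'eodory representation. Since $p\in\mathcal{P}$, there is a probability measure $\sigma$ on $\partial\mathbb{D}$ with $c_k=2\int_{\partial\mathbb{D}}x^{k}\,d\sigma(x)$ for every $k\ge 1$; writing $\gamma_k=\int x^k\,d\sigma$ one has $|\gamma_k|\le 1$, and the quantity to be bounded becomes $|\mu c_nc_m-c_{n+m}|=2\,|2\mu\,\gamma_n\gamma_m-\gamma_{n+m}|$. Thus it suffices to show that $|2\mu\,\gamma_n\gamma_m-\gamma_{n+m}|\le 1$ when $0\le\mu\le1$ and that $|2\mu\,\gamma_n\gamma_m-\gamma_{n+m}|\le|2\mu-1|$ otherwise.

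The crux is a covariance estimate. Viewing $x\mapsto x^n$ and $x\mapsto x^m$ as unit-modulus elements of $L^2(\sigma)$ and applying the Cauchy--Schwarz inequality to the mean-zero functions $x^n-\gamma_n$ and $x^m-\gamma_m$, together with $\int|x^k-\gamma_k|^2\,d\sigma=1-|\gamma_k|^2$, yields
\[
|\gamma_{n+m}-\gamma_n\gamma_m|\le\sqrt{(1-|\gamma_n|^2)(1-|\gamma_m|^2)}.
\]
Putting $|\gamma_n|=\cos\alpha$ and $|\gamma_m|=\cos\beta$ with $\alpha,\beta\in[0,\pi/2]$, the elementary identity $\sqrt{(1-|\gamma_n|^2)(1-|\gamma_m|^2)}+|\gamma_n||\gamma_m|=\sin\alpha\sin\beta+\cos\alpha\cos\beta=\cos(\alpha-\beta)\le1$ is the engine of both cases; in particular $\sqrt{(1-|\gamma_n|^2)(1-|\gamma_m|^2)}\le1-|\gamma_n||\gamma_m|$.

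With this in hand I would split $2\mu\,\gamma_n\gamma_m-\gamma_{n+m}=-(\gamma_{n+m}-\gamma_n\gamma_m)+(2\mu-1)\gamma_n\gamma_m$ and use the triangle inequality to get $|2\mu\,\gamma_n\gamma_m-\gamma_{n+m}|\le\sqrt{(1-|\gamma_n|^2)(1-|\gamma_m|^2)}+|2\mu-1|\,|\gamma_n||\gamma_m|$. When $0\le\mu\le1$ we have $|2\mu-1|\le1$, so the right-hand side is at most $\cos(\alpha-\beta)\le1$; when $\mu\notin[0,1]$ we have $|2\mu-1|\ge1$, and the bound $\sqrt{(1-|\gamma_n|^2)(1-|\gamma_m|^2)}\le1-|\gamma_n||\gamma_m|$ gives $|2\mu\,\gamma_n\gamma_m-\gamma_{n+m}|\le|2\mu-1|(1-|\gamma_n||\gamma_m|)+|2\mu-1|\,|\gamma_n||\gamma_m|=|2\mu-1|$. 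Multiplying through by $2$ reproduces the two stated cases.

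For sharpness I would exhibit explicit extremal members of $\mathcal{P}$: the function $p(z)=(1+z^{n+m})/(1-z^{n+m})$ has $c_n=c_m=0$ and $c_{n+m}=2$, so $|\mu c_nc_m-c_{n+m}|=2$ for every $\mu\in[0,1]$, while $p(z)=(1+z)/(1-z)$ has $c_k=2$ for all $k$, giving $|\mu c_nc_m-c_{n+m}|=|4\mu-2|=2|2\mu-1|$ for $\mu\notin[0,1]$. I expect the main obstacle to be isolating the correct covariance inequality and recognizing the $\cos(\alpha-\beta)$ identity that pins the bound at $1$ on $[0,1]$; once that is in place, the sign bookkeeping for $\mu\notin[0,1]$ and the verification of the extremal functions are routine.
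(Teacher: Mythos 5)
Your proof is correct, and every step checks out: the Herglotz representation gives $c_k=2\gamma_k$ with $|\gamma_k|\le 1$; the covariance identity $\gamma_{n+m}-\gamma_n\gamma_m=\int(x^n-\gamma_n)(x^m-\gamma_m)\,d\sigma$ together with Cauchy--Schwarz and $\int|x^k-\gamma_k|^2\,d\sigma=1-|\gamma_k|^2$ yields the key bound; the trigonometric substitution correctly gives $\sqrt{(1-|\gamma_n|^2)(1-|\gamma_m|^2)}\le 1-|\gamma_n||\gamma_m|$; and the split $2\mu\gamma_n\gamma_m-\gamma_{n+m}=-(\gamma_{n+m}-\gamma_n\gamma_m)+(2\mu-1)\gamma_n\gamma_m$ handles both parameter ranges cleanly. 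The extremal functions $(1+z^{n+m})/(1-z^{n+m})$ and $(1+z)/(1-z)$ do attain the two bounds. Note, however, that the paper itself gives no proof of this lemma --- it is quoted verbatim from Efraimidis and Vuk\-oti\'c \cite{MR3348983}, where it is established by a Livingston-type inequality of exactly the kind you derived (the estimate $|c_{n+m}-\tfrac{1}{2}c_nc_m|\le 2-\tfrac{1}{2}|c_n||c_m|$ is the coefficient form of your covariance bound). So your argument is not a departure from the source but a complete, self-contained reconstruction of it; its value here is that it makes the paper's black-box citation verifiable from first principles using only the Herglotz formula, which the paper already invokes in the proof of Lemma \ref{p4lem3}.
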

\begin{lemma}\cite[Theorem 4(b), p.\ 678]{MR760980} \label{p4lem2}
A function $p(z)= 1 + \sum_{k=1}^{\infty} c_k z^k \in \mathcal{P}$ if and only if
\begin{align}
\sum_{j=0}^{\infty} \left\lbrace \left|2 z_j + \sum_{k=1}^{\infty} c_k z_{k+j} \right|^2 - \left|\sum_{k=0}^{\infty} c_{k+1} z_{k+j} \right|^2 \right\rbrace &\geq 0 \notag
\end{align}
for every sequence $\{z_k\}$ of complex numbers which satisfy $\limsup_{k \to \infty}  |z_k|^{1/k} < 1$.
\end{lemma}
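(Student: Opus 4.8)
The plan is to read Lemma~\ref{p4lem2} as a disguised form of the classical Carath\'eodory--Toeplitz positivity criterion and to prove both implications through a single algebraic identity linking the displayed quadratic form to the Toeplitz form of $p$. Write
\[ u_j := 2z_j + \sum_{k=1}^{\infty} c_k z_{k+j}, \qquad v_j := \sum_{k=0}^{\infty} c_{k+1} z_{k+j}, \]
so that the claim is that $Q := \sum_{j\ge 0}\bigl(|u_j|^2-|v_j|^2\bigr)\ge 0$ characterizes $p\in\mathcal{P}$. Since $p$ is analytic on $\mathbb{D}$ we have $\limsup_k|c_k|^{1/k}\le 1$, and the hypothesis $\limsup_k|z_k|^{1/k}<1$ gives $|z_k|\le C\rho^{k}$ with $\rho<1$; hence every series above, and every double series below, converges absolutely. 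Securing these estimates first is what makes the subsequent rearrangements and interchanges of summation and integration legitimate.

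The one structural fact the argument rests on is the shift relation $v_{j+1}=u_j-2z_j$, read off directly from the definitions. Re-indexing $\sum_{j\ge 1}|v_j|^2=\sum_{j\ge 0}|u_j-2z_j|^2$, expanding the squares, and collecting terms, I would establish the purely algebraic identity
\[ Q = 2T - |v_0|^2, \qquad T := \sum_{\mu,\nu\ge 0} c_{\mu-\nu}\,z_\mu\overline{z_\nu}, \]
where $T$ denotes the Hermitian Toeplitz form of $p$ under the conventions $c_0:=2$ and $c_{-k}:=\overline{c_k}$. Checking that the telescoped sum reorganizes exactly into $2T$ with the single surviving boundary term $-|v_0|^2$ is the main obstacle: it is a careful but elementary bookkeeping of doubly-infinite sums, and it is the step where one must be sure that no further boundary contribution is lost in the limit.

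Granting the identity, the converse is immediate. If $Q\ge 0$ for every admissible sequence $\{z_k\}$, then in particular for every finitely supported sequence $2T=Q+|v_0|^2\ge 0$, so the Toeplitz form $T$ is positive semidefinite. By the Carath\'eodory--Toeplitz theorem this is equivalent to $\RE p\ge 0$ on $\mathbb{D}$; and because $p(0)=1$, the minimum principle for harmonic functions forces $\RE p>0$, i.e. $p\in\mathcal{P}$.

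For the forward implication I would feed in the Herglotz representation $p(z)=\int_{|\zeta|=1}\frac{\zeta+z}{\zeta-z}\,d\mu(\zeta)$ with $\mu$ a probability measure, equivalently $c_k=2\int\overline{\zeta}^{\,k}\,d\mu$. Setting $F_j(\zeta):=\sum_{k\ge 0}z_{j+k}\overline{\zeta}^{\,k}$, a direct substitution gives $u_j=2\int F_j\,d\mu$ and $v_j=2\int\overline{\zeta}\,F_j\,d\mu$, while $T=2\int|F_0|^2\,d\mu$. The identity then reads
\[ Q = 4\left(\int_{|\zeta|=1}|F_0|^2\,d\mu - \left|\int_{|\zeta|=1}\overline{\zeta}\,F_0\,d\mu\right|^2\right), \]
and since $|\overline{\zeta}|=1$ on the circle and $\mu$ is a probability measure, the Cauchy--Schwarz inequality gives $\bigl|\int\overline{\zeta}\,F_0\,d\mu\bigr|^2\le\int|F_0|^2\,d\mu$, whence $Q\ge 0$. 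This closes both directions.
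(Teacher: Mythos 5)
This lemma is quoted in the paper directly from Robertson's work \cite[Theorem 4(b), p.\ 678]{MR760980} and no proof is given there, so there is no in-paper argument to compare yours against; I can only assess your proof on its own terms, and it is correct and self-contained. The shift relation $v_{j+1}=u_j-2z_j$ does hold, and the resulting telescoping
\[ Q=\sum_{j\ge 0}\bigl(|u_j|^2-|u_j-2z_j|^2\bigr)-|v_0|^2=\sum_{j\ge 0}\bigl(4\RE (u_j\overline{z_j})-4|z_j|^2\bigr)-|v_0|^2=2T-|v_0|^2 \]
checks out with $T=2\sum_j|z_j|^2+2\RE\sum_{j\ge 0}\sum_{k\ge 1}c_kz_{k+j}\overline{z_j}$, which is exactly the Hermitian Toeplitz form under your conventions $c_0=2$, $c_{-k}=\overline{c_k}$; I verified this both by direct expansion and on the test sequences $(z_0,0,0,\dots)$ and $(z_0,z_1,0,\dots)$. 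Your convergence remarks (geometric decay of $z_k$ against $\limsup_k|c_k|^{1/k}\le 1$) do legitimize all rearrangements, including the absolute convergence of $T$ itself in the converse direction where one only knows $p$ is analytic. The forward direction via Herglotz, $T=2\int|F_0|^2\,d\mu$, $v_0=2\int\overline{\zeta}F_0\,d\mu$, and Cauchy--Schwarz against the probability measure is clean and correctly explains why $Q\ge 0$ needs more than mere positivity of $T$ (one needs $2T\ge |v_0|^2$, which Cauchy--Schwarz delivers). The converse via finitely supported sequences, Carath\'eodory--Toeplitz, and the minimum principle with $p(0)=1$ is also sound. The one place where I would ask for the details to be written out rather than announced is the bookkeeping establishing $Q=2T-|v_0|^2$, since you yourself flag it as the main obstacle; as shown above it is a two-line computation once the shift relation is in hand, so this is a presentational rather than a mathematical gap.
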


\begin{lemma}\cite[Theorem 4(f), p.\ 678]{MR760980}\label{p4lem2.1}
A function $f(z)= z + \sum_{k=2}^{\infty} a_k z^k \in T$ if and only if
\begin{align}
\sum_{j=0}^{\infty} \left\lbrace \left|2 z_j + \sum_{k=1}^{\infty}(a_{k+1}- a_{k-1}) z_{k+j} \right|^2 - \left|\sum_{k=0}^{\infty} (a_{k+2}-a_k) z_{k+j} \right|^2 \right\rbrace &\geq 0 \notag
\end{align}
for every sequence $\{z_k\}$ of complex numbers which satisfy $\limsup_{k \to \infty}  |z_k|^{1/k} < 1$.
\end{lemma}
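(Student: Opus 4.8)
The plan is to deduce this characterization of $T$ from the one for $\mathcal{P}$ recorded in Lemma \ref{p4lem2}, via the classical Rogosinski correspondence between typically real functions and functions of positive real part with real coefficients. Concretely, I would set $p(z) := (1-z^2)f(z)/z$ and use the fact that $f \in T$ if and only if $p \in \mathcal{P}_{\mathbb{R}}$. The forward implication is Rogosinski's integral representation $f(z)=\int_{-1}^{1} z/(1-2tz+z^2)\,d\mu(t)$ for a probability measure $\mu$, under which $p(z)=\int_{-1}^{1}(1-z^2)/(1-2tz+z^2)\,d\mu(t)$ is an average of the kernels $(1-z^2)/(1-2tz+z^2)=\tfrac12\big((1+e^{-i\theta}z)/(1-e^{-i\theta}z)+(1+e^{i\theta}z)/(1-e^{i\theta}z)\big)$ with $t=\cos\theta$, each of which lies in $\mathcal{P}_{\mathbb{R}}$; the converse follows by reversing this using the real Herglotz representation of $\mathcal{P}_{\mathbb{R}}$.

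The second step is a coefficient identification. Adopting the conventions $a_0=0$ and $a_1=1$ and expanding,
\begin{align}
p(z)=\frac{(1-z^2)}{z}f(z)=\sum_{k\geq 1} a_k z^{k-1}-\sum_{k\geq 1} a_k z^{k+1}=1+\sum_{k=1}^{\infty}(a_{k+1}-a_{k-1})z^{k}, \notag
\end{align}
so the Taylor coefficients $c_k$ of $p$ are exactly $c_k=a_{k+1}-a_{k-1}$, and therefore $c_{k+1}=a_{k+2}-a_k$.

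Finally, I would insert these identities into Lemma \ref{p4lem2}: substituting $c_k=a_{k+1}-a_{k-1}$ in the first inner sum and $c_{k+1}=a_{k+2}-a_k$ in the second turns the condition ``$p\in\mathcal{P}$'' verbatim into the displayed inequality. Since a function in $T$ necessarily has real coefficients, $p$ has real coefficients as well, so for the functions at hand $p\in\mathcal{P}$ and $p\in\mathcal{P}_{\mathbb{R}}$ are equivalent; combining this with the correspondence $f\in T\Leftrightarrow p\in\mathcal{P}_{\mathbb{R}}$ of the first step yields the lemma.

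I expect the only genuine obstacles to be conceptual rather than computational. The first is establishing the bijection $f\leftrightarrow p=(1-z^2)f(z)/z$ between $T$ and $\mathcal{P}_{\mathbb{R}}$ (equivalently, checking that each kernel $(1-z^2)/(1-2tz+z^2)$ maps $\mathbb{D}$ into the right half-plane). The second, and more delicate, is the reality bookkeeping: the form in Lemma \ref{p4lem2} characterizes all of $\mathcal{P}$, whereas $T$ corresponds to the real subclass $\mathcal{P}_{\mathbb{R}}$, so the reduction must be carried out within functions with real coefficients, where the distinction disappears. Once the correspondence, the real-coefficient normalization, and the index shifts (with $a_0=0$) are in place, the passage from Lemma \ref{p4lem2} to the present statement is a purely formal substitution.
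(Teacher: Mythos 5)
The paper gives no proof of this lemma at all---it is quoted from the cited source---so there is nothing internal to compare against; but your route, reducing to Lemma \ref{p4lem2} via $p(z)=(1-z^2)f(z)/z$, is exactly the correspondence the paper itself uses later (in the proof of Theorem \ref{p4thm3}, where $f\in T$ is written as $f(z)=\big(z/(1-z^2)\big)p(z)$ with $p\in\mathcal{P}_{\mathbb{R}}$), and your coefficient bookkeeping $c_k=a_{k+1}-a_{k-1}$, $c_{k+1}=a_{k+2}-a_k$ (with $a_0=0$, $a_1=1$) is correct. The forward implication---the only direction the paper actually uses---is therefore fully established by your argument.

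The gap is in the converse, precisely at the ``reality bookkeeping'' you flagged and then dismissed. By Lemma \ref{p4lem2}, positivity of the form for all complex sequences is equivalent to $p\in\mathcal{P}$, not to $p\in\mathcal{P}_{\mathbb{R}}$, and $p\in\mathcal{P}$ alone does not give $f\in T$. Your sentence ``since a function in $T$ necessarily has real coefficients, $p$ has real coefficients as well, so for the functions at hand the two are equivalent'' presupposes $f\in T$, which in the converse direction is the conclusion, not a hypothesis. The distinction is not vacuous: take $p(z)=(1+iz)/(1-iz)\in\mathcal{P}$, with coefficients $c_k=2i^k$ not all real; the associated $f(z)=zp(z)/(1-z^2)=z+2iz^2+\cdots$ satisfies the displayed inequality for every admissible sequence (because $p\in\mathcal{P}$), yet it is not real on the real axis and hence not typically real. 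So the equivalence as literally stated cannot be proved without assuming the $a_k$ are real: either that reality is a standing hypothesis in the source's Theorem 4(f) (most likely), or some additional argument would be needed to show the form forces reality---and it does not. To make your proof complete, add the reality of the coefficients to the hypotheses (or restrict the ``if'' direction to real sequences $\{a_k\}$); with that amendment the rest of your substitution argument goes through verbatim.
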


\begin{lemma}\label{p4lem3}
Let $\nu(t)$ be a probability measure on $[0,2\pi]$. Then  for all $n,m \in \mathbb{N}$,
\begin{align*}
\left|\lambda \int_{0}^{2 \pi} e^{i n t} \, d \nu(t) \int_{0}^{2 \pi} e^{i m t} \, d \nu(t) - \int_{0}^{2 \pi} e^{i (n+m) t} \, d \nu(t)\right| \leq \begin{cases} 1, &\text{$ 0 \leq \lambda \leq 2$;}\\
| \lambda -1|, &\text{elsewhere.}
\end{cases}
\end{align*}
\end{lemma}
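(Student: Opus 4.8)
The plan is to recognize the three moment integrals as (up to a scalar factor and complex conjugation) the Taylor coefficients of a Carath\'eodory function, after which the estimate reduces immediately to Lemma~\ref{p4lem1}. Write $I_k := \int_{0}^{2\pi} e^{ikt}\,d\nu(t)$ for the moments, so that the quantity to be bounded is exactly $|\lambda I_n I_m - I_{n+m}|$. Since $\nu$ is a probability measure on $[0,2\pi]$, I would first associate to it the Herglotz integral
$$
p(z) = \int_{0}^{2\pi} \frac{1+ze^{-it}}{1-ze^{-it}}\,d\nu(t),
$$
which is analytic on $\mathbb{D}$, satisfies $p(0)=1$, and has positive real part; hence $p \in \mathcal{P}$ and we may write $p(z) = 1 + \sum_{k=1}^{\infty} c_k z^k$.

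Next I would read off the coefficients by expanding the Poisson--type kernel as a geometric series: with $w = ze^{-it}$ one has $(1+w)/(1-w) = 1 + 2\sum_{k=1}^{\infty} w^k$, so integrating term by term gives $c_k = 2\int_{0}^{2\pi} e^{-ikt}\,d\nu(t)$. Because $\nu$ is a positive (real) measure, $\int_{0}^{2\pi} e^{-ikt}\,d\nu(t) = \overline{I_k}$, and therefore $c_k = 2\overline{I_k}$. The decisive bookkeeping step is then the identity
$$
\frac{\lambda}{2}\,c_n c_m - c_{n+m} = 2\lambda\,\overline{I_n}\,\overline{I_m} - 2\,\overline{I_{n+m}} = 2\,\overline{\bigl(\lambda I_n I_m - I_{n+m}\bigr)},
$$
so that, taking moduli, $|\lambda I_n I_m - I_{n+m}| = \tfrac12\bigl|\tfrac{\lambda}{2}\,c_n c_m - c_{n+m}\bigr|$.

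Finally I would apply Lemma~\ref{p4lem1} with the substitution $\mu = \lambda/2$. When $0 \le \lambda \le 2$, equivalently $0 \le \mu \le 1$, the lemma gives $|\tfrac{\lambda}{2} c_n c_m - c_{n+m}| \le 2$, whence $|\lambda I_n I_m - I_{n+m}| \le 1$; in the remaining range the lemma yields the bound $2|2\mu-1| = 2|\lambda-1|$, whence $|\lambda I_n I_m - I_{n+m}| \le |\lambda - 1|$. This reproduces the two cases of the statement exactly. I do not expect a genuine obstacle here: the only thing that requires care is matching the scalings correctly, namely the factor $2$ coming from the Carath\'eodory expansion $c_k = 2\overline{I_k}$ together with the reparametrization $\mu = \lambda/2$, which is precisely what converts the thresholds $\mu \in \{0,1\}$ and the value $|2\mu-1|$ of Lemma~\ref{p4lem1} into the thresholds $\lambda \in \{0,2\}$ and the value $|\lambda-1|$ claimed above.
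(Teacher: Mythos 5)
Your proof is correct and follows essentially the same route as the paper: both construct a Carath\'eodory function from $\nu$ via the Herglotz representation, identify its coefficients with (twice) the moments, and invoke Lemma~\ref{p4lem1} under the substitution $\mu=\lambda/2$. The only cosmetic difference is your choice of the conjugated kernel $(1+ze^{-it})/(1-ze^{-it})$, which makes $c_k=2\overline{I_k}$ rather than $2I_k$; since $\lambda$ is real this only conjugates the expression being bounded and is immaterial after taking moduli.
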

\begin{proof}
The function  $p(z)= 1+ \sum_{n=1}^{\infty} c_n z^n $ given  by the Herglotz representation formula \cite[Corollary 3.6, p.\ 30]{MR768747},
$$p(z)= \int_0^{2\pi} \dfrac{1+ e^{i t}z}{1- e^{i t}z} \, d \nu(t)$$ is clearly in $\mathcal{P}$. On comparing the coefficients on both sides in the above equation, we obtain
$$c_n=2\int_0^{2\pi}  e^{i n t} \, d \nu(t) \quad (n \geq 1).$$
An  application of Lemma \ref{p4lem1}  to the function $p$ gives
\begin{align*}
\left|2 \mu \int_{0}^{2 \pi} e^{i n t} \, d \nu(t) \int_{0}^{2 \pi} e^{i m t} \, d \nu(t) - \int_{0}^{2 \pi} e^{i (n+m) t} \, d \nu(t)\right| \leq \begin{cases} 1, &\text{$ 0 \leq \mu \leq 1$;}\\
| 2\mu -1|, &\text{elsewhere.}
\end{cases}
\end{align*} On substituting $\lambda=2 \mu$, the desired estimates follow.
\end{proof}
For $\lambda=2$, the above lemma is proved in \cite[Lemma 2.1, p.\ 330]{MR1694809}.
\section{Generalized Zalcman conjecture for $\mathcal{S}^*(\alpha)$ and $\mathcal{K}(\alpha)$}

For $ \alpha <1$,  define  a function $f_1: \mathbb{D} \to \mathbb{C}$ by
\begin{align}
f_1(z):= \dfrac{z}{(1- z)^{2(1-\alpha)}}= z+  \sum_{n=2}^{\infty} A_n z^n, \label{p4eqllll}
\end{align}
where
\begin{align}
A_n= \dfrac{1}{(n-1)!}\prod_{j=0}^{n-2} \big(2(1-\alpha)+j\big). \label{p4eq1}
\end{align}
It is known that $f_1$  and its rotations work as  extremal functions for the coefficient bounds of functions in the class $\mathcal{S}^*(\alpha)$ \cite[Theorem 5.6, p.\ 324]{MR2118626}.  Therefore, they  could be the expected extremal functions  for the  upper bound  of the  generalized Zalcman coefficient functional $\phi(f,n,m;\lambda)$ when $f \in \mathcal{S}^*(\alpha) $. This is shown to be true by the following theorem at least when   $\lambda \geq 2 A_{n+m-1}/(A_n A_m)$ or $\lambda  \leq 0$.

\begin{theorem}\label{p4thm1}
If $f(z)= z + \sum_{n=2}^{\infty}a_n z^n \in H\mathcal{S}^*(\alpha)$ $(\alpha <1)$, then  for all $n,m=2, 3, \ldots$,
\begin{align*}
\left|\lambda a_n a_m - a_{n+m-1}\right| \leq \begin{cases}A_{n+m-1}, &\text{$ 0 \leq \lambda \leq \dfrac{2 A_{n+m-1}}{A_n A_m}$;}\\
| \lambda A_n A_m -A_{n+m-1}|, &\text{elsewhere,}
\end{cases}
\end{align*}
where $A_n$ is given by  \eqref{p4eq1}.
The second inequality is sharp for the function  $f_1$  and its rotations where $f_1$ is given  by the equation \eqref{p4eqllll}.
\end{theorem}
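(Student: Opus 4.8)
The plan is to reduce the estimate to Lemma \ref{p4lem3} by means of the classical integral representation of the closed convex hull $H\mathcal{S}^*(\alpha)$. It is well known that the extreme points of $H\mathcal{S}^*(\alpha)$ are exactly the rotations $z/(1-e^{it}z)^{2(1-\alpha)}$ of $f_1$, so every $f\in H\mathcal{S}^*(\alpha)$ admits a representation
$$f(z)=\int_0^{2\pi}\frac{z}{(1-e^{it}z)^{2(1-\alpha)}}\,d\nu(t)$$
for some probability measure $\nu$ on $[0,2\pi]$. Since each coefficient functional $a_n$ is continuous and linear in $f$, I would expand the kernel via \eqref{p4eqllll}--\eqref{p4eq1} and integrate termwise to obtain the key identity
$$a_n=A_n\int_0^{2\pi}e^{i(n-1)t}\,d\nu(t)\qquad(n\ge 2),$$
which ties the coefficients of $f$ to the moments of $\nu$.

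First I would substitute this identity into the functional. Writing $N=n-1$ and $M=m-1$, so that $N+M=n+m-2$ and $a_{n+m-1}=A_{n+m-1}\int_0^{2\pi}e^{i(N+M)t}\,d\nu(t)$, I would factor out $A_{n+m-1}$:
$$\lambda a_n a_m-a_{n+m-1}=A_{n+m-1}\left(\frac{\lambda A_nA_m}{A_{n+m-1}}\int_0^{2\pi}e^{iNt}\,d\nu(t)\int_0^{2\pi}e^{iMt}\,d\nu(t)-\int_0^{2\pi}e^{i(N+M)t}\,d\nu(t)\right).$$
Now I would apply Lemma \ref{p4lem3} with the indices $N,M\ge 1$ and with the parameter $\lambda'=\lambda A_nA_m/A_{n+m-1}$. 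Taking absolute values and multiplying through by the positive quantity $A_{n+m-1}$, the case $0\le\lambda'\le 2$ yields the bound $A_{n+m-1}$, while the remaining case yields $A_{n+m-1}\,|\lambda'-1|=|\lambda A_nA_m-A_{n+m-1}|$. Since $0\le\lambda'\le 2$ is precisely the condition $0\le\lambda\le 2A_{n+m-1}/(A_nA_m)$, this reproduces the two branches of the asserted inequality.

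For sharpness I would test $f_1$ itself, which corresponds to the unit point mass at $t=0$; then every moment equals $1$, so $a_n=A_n$ and $\lambda a_n a_m-a_{n+m-1}=\lambda A_nA_m-A_{n+m-1}$, showing the second branch is attained (its rotations being handled by the rotational structure of the class). I expect the only real subtleties to be twofold: first, justifying the termwise integration and the linearity step that converts the convex-hull representation into the moment identity, which is exactly where the nonlinearity of $\phi$ in $f$ could have been an obstruction, but it is absorbed because Lemma \ref{p4lem3} already estimates a product of two distinct moments; and second, checking that the threshold $\lambda'=2$ translates correctly to $2A_{n+m-1}/(A_nA_m)$ and that the two branches agree at the boundary. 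No genuinely hard inequality remains once the representation and Lemma \ref{p4lem3} are in place.
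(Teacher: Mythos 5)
Your proposal is correct and follows essentially the same route as the paper: the integral representation of $H\mathcal{S}^*(\alpha)$ over rotations of $f_1$, the moment identity $a_n=A_n\int_0^{2\pi}e^{i(n-1)t}\,d\nu(t)$, and a direct application of Lemma \ref{p4lem3} with parameter $\lambda A_nA_m/A_{n+m-1}$, together with the point mass at $t=0$ for sharpness. No substantive differences.
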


\begin{proof}
Since $f \in H\mathcal{S}^*(\alpha)$ $(\alpha <1)$,  there exists a probability measure $\nu(t)$ on $[0,2\pi]$  \cite[Theorem 3, p.\ 417 ]{MR0338337}  such that $$f(z)=\int_{0}^{2 \pi} \dfrac{z}{(1- e^{it}z)^{2(1-\alpha)}} \, d \nu(t).$$On comparing the coefficients on both sides, we obtain
$$a_n= A_n \int_0^{2\pi}  e^{i (n-1) t} \, d \nu(t) \quad (n \geq 2),$$ where $A_n$ is given by the equation  \eqref{p4eq1}. This implies
\begin{align*}
&\left|\lambda a_n a_m - a_{n+m-1}\right| \\ &{}= A_{n+m-1}\left|\lambda\dfrac{A_n A_m}{A_{n+m-1}} \int_{0}^{2 \pi} e^{i (n-1) t} \, d \nu(t) \int_{0}^{2 \pi} e^{i (m-1) t} \, d \nu(t) - \int_{0}^{2 \pi} e^{i (n+m-2) t} \, d \nu(t)\right|.
\end{align*}
An application of  Lemma \ref{p4lem3}  to the above equation  yields
\begin{align*}
&\left|\lambda a_n a_m - a_{n+m-1}\right| \leq \begin{cases}A_{n+m-1}, &\text{$ 0 \leq \lambda \leq \dfrac{2 A_{n+m-1}}{A_n A_m}$;}\\
| \lambda A_n A_m -A_{n+m-1}|, &\text{elsewhere.}
\end{cases}  \qedhere
\end{align*}
\end{proof}

For $m=n$, we have the following sharp result.
\begin{corollary}
If $f(z)= z + \sum_{n=2}^{\infty}a_n z^n \in H\mathcal{S}^*(\alpha)$ $(\alpha<1)$, then for all $n=2, 3, \ldots$,
\begin{align*}
\left|\lambda a_n^2  - a_{2n-1}\right| \leq \begin{cases}A_{2n-1}, &\text{$ 0 \leq \lambda \leq \dfrac{2 A_{2n-1}}{A_n^2}$;}\\
| \lambda A_n^2 -A_{2n-1}|, &\text{elsewhere,}
\end{cases}
\end{align*}
where $A_n$ is given by  \eqref{p4eq1}. The second inequality is sharp for the function $f_1$, given by  the equation  \eqref{p4eqllll},  and its rotations  whereas the first  inequality is sharp for the function of the form
\begin{align}
f(z)= \sum_{k=1}^{2(n-1)} m_k g_k(z), \label{p4eq111.1}
\end{align}
where $0 \leq m_k \leq 1$,  $\sum_{k=1}^{n-1} m_{2k}=\sum_{k=1}^{n-1} m_{2k-1}=1/2$,  $g_k(z)= e^{-i \theta_k} f_1(e^{i \theta_k}z)$ and  $\theta_k= (2k+1)\pi/(2n-2)$.
\end{corollary}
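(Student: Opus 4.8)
The plan is to obtain the inequality itself as the special case $m=n$ of Theorem \ref{p4thm1}, reserving the bulk of the work for the two sharpness assertions, since the bound comes for free. Setting $m=n$ in the statement of Theorem \ref{p4thm1} immediately produces the displayed two-case estimate for $|\lambda a_n^2 - a_{2n-1}|$ with the breakpoint at $\lambda = 2A_{2n-1}/A_n^2$.

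For the sharpness of the second inequality I would compute directly with the rotations of $f_1$. Writing $g(z)= e^{-i\theta}f_1(e^{i\theta}z)$, the coefficients become $a_j = A_j e^{i(j-1)\theta}$; in particular $a_n^2 = A_n^2 e^{2i(n-1)\theta}$ and $a_{2n-1} = A_{2n-1} e^{2i(n-1)\theta}$, so the common phase factors out and $|\lambda a_n^2 - a_{2n-1}| = |\lambda A_n^2 - A_{2n-1}|$. Taking $\theta=0$ recovers $f_1$ itself, so equality holds throughout the ``elsewhere'' branch.

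The substantive part is the sharpness of the first branch, on the range $0 \leq \lambda \leq 2A_{2n-1}/A_n^2$, via the function in \eqref{p4eq111.1}. First I would confirm that $f$ genuinely lies in $H\mathcal{S}^*(\alpha)$: each $g_k$ is a rotation of $f_1$, hence belongs to $\mathcal{S}^*(\alpha)$, and the weights satisfy $m_k \geq 0$ together with $\sum_k m_k = \sum_{k=1}^{n-1} m_{2k} + \sum_{k=1}^{n-1} m_{2k-1} = 1$, so $f$ is an honest convex combination and the normalization $a_1=1$ is preserved. The coefficients of $f$ are $a_j = A_j \sum_k m_k e^{i(j-1)\theta_k}$ for $j \geq 2$, and the entire point of the choice $\theta_k = (2k+1)\pi/(2n-2)$ is the arithmetic it forces on the two relevant exponents: $(n-1)\theta_k = (2k+1)\pi/2$ gives $e^{i(n-1)\theta_k} = i(-1)^k$, while $(2n-2)\theta_k = (2k+1)\pi$ gives $e^{i(2n-2)\theta_k} = -1$.

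Feeding these into the coefficient formula is the crux. For $a_n$ the factor $i(-1)^k$ separates the even- and odd-indexed terms with opposite signs, so the balancing condition $\sum_{k=1}^{n-1} m_{2k} = \sum_{k=1}^{n-1} m_{2k-1} = 1/2$ forces $a_n = A_n\, i\,(\tfrac12 - \tfrac12) = 0$. For $a_{2n-1}$ every exponential equals $-1$, whence $a_{2n-1} = -A_{2n-1}\sum_k m_k = -A_{2n-1}$. Consequently $|\lambda a_n^2 - a_{2n-1}| = |0 + A_{2n-1}| = A_{2n-1}$ for every admissible $\lambda$, which is precisely the first branch of the bound. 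I expect the only real obstacle to be the bookkeeping that yields $a_n=0$, namely recognizing that the prescribed angles make the degree-$(n-1)$ phases alternate in sign so that the even/odd split of the masses cancels exactly; the remaining evaluations are routine.
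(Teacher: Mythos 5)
Your proposal is correct and follows exactly the route the paper intends: the inequality is the $m=n$ specialization of Theorem \ref{p4thm1} (the paper states the corollary without further proof), and your verification of the extremal functions—the phase cancellation $e^{i(n-1)\theta_k}=i(-1)^k$ forcing $a_n=0$ while $a_{2n-1}=-A_{2n-1}$, and the common phase factoring out for the rotations of $f_1$—is the computation the paper leaves to the reader. No gaps.
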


For $\alpha=0$ and $\lambda \geq 0$, the above corollary reduces to the inequalities mentioned in \cite[p.\ 474]{MR824446}. It is a well-known result given by Alexander that a function $f \in \mathcal{A}$ is  in  $\mathcal{K}$ if and only if $z f'(z) \in \mathcal{S}^*$. This implies that for $ \alpha <1$,  $f \in H\mathcal{K}(\alpha)$ if and only if $z f'(z) \in H\mathcal{S}^*(\alpha)$ and therefore, we have the following deduction from  the Theorem \ref{p4thm1}.

\begin{corollary}\label{p4cor1}
If $f(z)= z + \sum_{n=2}^{\infty}a_n z^n \in H\mathcal{K}(\alpha)$ $(\alpha<1)$, then for all $n,m=2, 3, \ldots$,
\begin{align*}
\left|\lambda a_n a_m - a_{n+m-1}\right| \leq \begin{cases}\dfrac{A_{n+m-1}}{n+m-1}, &\text{$ 0 \leq \lambda \leq \dfrac{2 n m A_{n+m-1}}{(n+m-1)A_n A_m}$;}\\
\left| \lambda \dfrac{ A_n A_m}{n m } - \dfrac{A_{n+m-1}}{n+m-1}\right|, &\text{elsewhere,}
\end{cases}
\end{align*}
where $A_n$ is given by the equation  \eqref{p4eq1}.  The second inequality is sharp for the function  $f_2$ and its rotations, where
\begin{align}
f_2(z)= \begin{cases}
\dfrac{(1-z)^{-(1-2 \alpha)}-1}{1-2\alpha}, &\text{ $\alpha \neq 1/2$;}\\
 -\log{(1-z)}, &\text{ $\alpha = 1/2$.}
\end{cases} \label{p4eq111.2}
\end{align}
\end{corollary}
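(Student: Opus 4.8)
The plan is to leverage Alexander's theorem, recalled in the text immediately before the statement: $f\in H\mathcal{K}(\alpha)$ precisely when $g(z):=zf'(z)\in H\mathcal{S}^*(\alpha)$. Thus the entire content of the corollary should follow from Theorem \ref{p4thm1} applied to $g$, together with an elementary change of the parameter. First I would write $f(z)=z+\sum_{n\geq 2}a_nz^n$ and $g(z)=zf'(z)=z+\sum_{n\geq 2}b_nz^n$ with $b_n=na_n$; since $g\in H\mathcal{S}^*(\alpha)$, Theorem \ref{p4thm1} supplies a bound on $|\mu\, b_nb_m-b_{n+m-1}|$ for each real $\mu$.

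The key algebraic step is to match the two functionals. Using $b_n=na_n$, $b_m=ma_m$ and $b_{n+m-1}=(n+m-1)a_{n+m-1}$, one has
$$\mu\, b_n b_m - b_{n+m-1} = \mu\, nm\, a_n a_m - (n+m-1) a_{n+m-1} = (n+m-1)\Big(\tfrac{\mu nm}{n+m-1}\, a_n a_m - a_{n+m-1}\Big).$$
Hence, choosing $\mu=\lambda(n+m-1)/(nm)$, the inner bracket becomes exactly $\lambda a_na_m-a_{n+m-1}$, so that
$$|\lambda a_n a_m - a_{n+m-1}| = \frac{1}{n+m-1}\,\big|\mu\, b_n b_m - b_{n+m-1}\big|.$$
Because $\lambda\mapsto\mu=\lambda(n+m-1)/(nm)$ is an increasing linear bijection fixing $0$, the hypothesis $0\leq\mu\leq 2A_{n+m-1}/(A_nA_m)$ of Theorem \ref{p4thm1} translates verbatim into $0\leq\lambda\leq 2nmA_{n+m-1}/\big((n+m-1)A_nA_m\big)$, and the two branches of the bound carry over. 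In the first branch, $|\mu\, b_nb_m-b_{n+m-1}|\leq A_{n+m-1}$ yields the stated $A_{n+m-1}/(n+m-1)$; in the second branch, dividing $|\mu A_nA_m-A_{n+m-1}|$ by $n+m-1$ and substituting $\mu$ reproduces $\big|\lambda A_nA_m/(nm)-A_{n+m-1}/(n+m-1)\big|$.

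For sharpness of the second inequality I would transport the extremal function of Theorem \ref{p4thm1}. There the extremal $g$ is $f_1(z)=z/(1-z)^{2(1-\alpha)}$, so the corresponding $f$ is obtained by inverting Alexander, i.e. $f_2'(z)=f_1(z)/z=(1-z)^{-2(1-\alpha)}$, whence $f_2(z)=\int_0^z(1-t)^{-2(1-\alpha)}\,dt$. Evaluating this elementary integral gives $f_2(z)=\big((1-z)^{-(1-2\alpha)}-1\big)/(1-2\alpha)$ when $\alpha\neq 1/2$, while the exponent $-2(1-\alpha)$ equals $-1$ exactly at $\alpha=1/2$, producing $f_2(z)=-\log(1-z)$; this matches \eqref{p4eq111.2}, and rotations of $g$ correspond to rotations of $f_2$ in the same manner.

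The computation is entirely routine; the only points requiring a little care are tracking the direction of the inequality under the substitution $\lambda\mapsto\mu$ (it is order-preserving because $(n+m-1)/(nm)>0$) and confirming that the integral of $f_1(z)/z$ produces precisely the closed forms in \eqref{p4eq111.2}, including the split at $\alpha=1/2$. I do not anticipate any genuine obstacle: the corollary is a direct consequence of Theorem \ref{p4thm1} through the Alexander correspondence.
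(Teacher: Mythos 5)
Your proposal is correct and is precisely the argument the paper intends: the corollary is presented as a direct deduction from Theorem \ref{p4thm1} via the Alexander correspondence $f\in H\mathcal{K}(\alpha)\iff zf'(z)\in H\mathcal{S}^*(\alpha)$, with the substitution $b_n=na_n$ and the reparametrization $\mu=\lambda(n+m-1)/(nm)$ that you carry out explicitly. Your verification of the parameter range, the two branches, and the integration of $f_1(z)/z$ yielding \eqref{p4eq111.2} (including the split at $\alpha=1/2$) supplies exactly the routine details the paper leaves to the reader.
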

For $\alpha=0$ and $\lambda \geq 2$, the above  corollary  reduces  to  \cite[Theorem 2.1, p.\ 3]{agrawal2016coefficient}.
For $m=n$, we have the following sharp result which has been proved in \cite{MR3542050} by maximizing the real-valued functional $\RE(\lambda a_n^2-a_{2n-1})$ for the case $\lambda \geq 0$.
\begin{corollary}\label{p4corr1}
If $f(z)= z + \sum_{n=2}^{\infty}a_n z^n \in H\mathcal{K}(\alpha)$ $(\alpha<1)$, then for all $n=2, 3, \ldots$,
\begin{align*}
\left|\lambda a_n^2  - a_{2n-1}\right| \leq \begin{cases}\dfrac{A_{2n-1}}{2n-1}, &\text{$ 0 \leq \lambda \leq \dfrac{2 n^2 A_{2n-1}}{(2n-1)A_n^2 }$;}\\
\left| \lambda \dfrac{ A_n^2 }{n^2} - \dfrac{A_{2n-1}}{2n-1}\right|, &\text{elsewhere,}
\end{cases}
\end{align*}
where $A_n$ is given by the equation  \eqref{p4eq1}. The second inequality is sharp for the function $f_2$, given by \eqref{p4eq111.2}, and its rotations whereas the first inequality is sharp for the function given by  the equation  \eqref{p4eq111.1} with  $g_k(z)= e^{-i \theta_k} f_2(e^{i \theta_k} z)$.

\end{corollary}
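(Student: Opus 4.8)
The plan is to deduce everything from Corollary~\ref{p4cor1} together with one explicit coefficient computation. First I would obtain the stated bound by simply setting $m=n$ in Corollary~\ref{p4cor1}: the quantities $A_{n+m-1}/(n+m-1)$, $\lambda A_nA_m/(nm)$ and the cut-off $2nmA_{n+m-1}/((n+m-1)A_nA_m)$ collapse to $A_{2n-1}/(2n-1)$, $\lambda A_n^2/n^2$ and $2n^2A_{2n-1}/((2n-1)A_n^2)$ respectively, which gives both branches of the asserted inequality with no further work.

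For the sharpness of the second (``elsewhere'') branch I would argue directly with $f_2$. By Alexander's theorem $f_2\in\mathcal{K}(\alpha)$ since $zf_2'(z)=f_1(z)\in\mathcal{S}^*(\alpha)$, and comparing coefficients in $zf_2'(z)=f_1(z)$ shows that the $j$-th coefficient of $f_2$ equals $A_j/j$. Hence for $f_2$ one has $a_n=A_n/n$ and $a_{2n-1}=A_{2n-1}/(2n-1)$, so $\lambda a_n^2-a_{2n-1}=\lambda A_n^2/n^2-A_{2n-1}/(2n-1)$, whose modulus is exactly the second bound; a rotation $e^{-i\phi}f_2(e^{i\phi}z)$ only multiplies both $a_n^2$ and $a_{2n-1}$ by $e^{i(2n-2)\phi}$, leaving $|\lambda a_n^2-a_{2n-1}|$ unchanged, so $f_2$ and its rotations settle this case.

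The substantive part is the sharpness of the first branch, and here the design of the extremal function in \eqref{p4eq111.1} is the crux. For $g_k(z)=e^{-i\theta_k}f_2(e^{i\theta_k}z)$ the $j$-th coefficient of $g_k$ is $(A_j/j)e^{i(j-1)\theta_k}$, so for $f=\sum_k m_kg_k$ one gets $a_j=(A_j/j)\sum_k m_ke^{i(j-1)\theta_k}$. With $\theta_k=(2k+1)\pi/(2n-2)$ I would compute $(n-1)\theta_k=(2k+1)\pi/2$, so that $e^{i(n-1)\theta_k}=i(-1)^k$, and $(2n-2)\theta_k=(2k+1)\pi$, so that $e^{i(2n-2)\theta_k}=-1$. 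The balance condition $\sum_{k=1}^{n-1}m_{2k}=\sum_{k=1}^{n-1}m_{2k-1}=1/2$ then forces $\sum_k m_k(-1)^k=0$, whence $a_n=0$, while $\sum_k m_k=1$ gives $a_{2n-1}=-A_{2n-1}/(2n-1)$. Therefore $|\lambda a_n^2-a_{2n-1}|=|a_{2n-1}|=A_{2n-1}/(2n-1)$, independent of $\lambda$, which is exactly the first bound.

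The main obstacle is recognizing and justifying this construction: one wants a member of $H\mathcal{K}(\alpha)$ (which $f$ is, being a convex combination of rotations of $f_2\in\mathcal{K}(\alpha)$ with $\sum_k m_k=1$) whose $n$-th coefficient vanishes, killing the $\lambda a_n^2$ term for every $\lambda$, while its $(2n-1)$-th coefficient still attains the extremal modulus $A_{2n-1}/(2n-1)$. The specific angles $\theta_k$ are chosen precisely so that $(n-1)\theta_k$ are odd multiples of $\pi/2$ (producing the alternating signs that cancel in $a_n$) and $(2n-2)\theta_k$ are odd multiples of $\pi$ (producing the common phase $-1$ that lets $|a_{2n-1}|$ reach its maximum). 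Everything else is routine bookkeeping.
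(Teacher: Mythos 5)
Your proposal is correct and follows the paper's intended route exactly: the corollary is obtained by setting $m=n$ in Corollary~\ref{p4cor1}, and the sharpness is verified with $f_2$ (coefficients $A_j/j$ via $zf_2'(z)=f_1(z)$) for the second branch and with the convex combination \eqref{p4eq111.1} of rotations of $f_2$ (where the choice $\theta_k=(2k+1)\pi/(2n-2)$ makes $a_n=0$ while $|a_{2n-1}|=A_{2n-1}/(2n-1)$) for the first branch. The paper leaves these verifications implicit; your computations supply them correctly.
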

If $\lambda \geq 0$, the above corollary reduces to \cite[Theorem 3.3]{MR3467599} and \cite[Theorem 4]{MR3542050} for $\alpha=-1/2$ and  $\alpha=1/2$ respectively. Also, for  $\alpha=0$ and $0 \leq \lambda \leq 2$,  the above corollary  was proved in   \cite[Theorem 3, p.\ 3]{efraimidis2014generalized}.
%\section{Generalized Zalcman conjecture for  typically real functions and for functions whose derivative has  real part of order $\beta$ }
\section{Generalized Zalcman conjecture for the class  $\mathcal{R}(\beta)$  and for typically real functions}
For $\lambda \geq nm/((1-\beta)(n+m-1))$ and $0 \leq \beta <1$, the second  inequality of the following theorem has been  recently proved by Agrawal and Sahoo \cite{agrawal2016coefficient} and they proposed it as an open problem for $ 0 < \lambda < nm/((1-\beta)(n+m-1))$ which has now  been  settled  in the following theorem by making use of the Hermitian form for functions in the class $\mathcal{P}$.
\begin{theorem}\label{p4thm2}
If $f(z)=z+\sum_{n=2}^{\infty}a_n z^n \in \mathcal{R}(\beta)$ $(\beta <1)$, then for all $n,m=2, 3, \ldots$,
\begin{align*}
\left|\lambda a_n a_m - a_{n+m-1}\right| \leq \begin{cases}\dfrac{2(1-\beta)}{n+m-1}, &\text{$ 0 \leq \lambda \leq \dfrac{n m }{(1-\beta)(n+m-1)}$;}\\
\left|\dfrac{4 \lambda (1-\beta)^2}{n m } - \dfrac{2(1-\beta)}{n+m-1}\right|, &\text{elsewhere.}
\end{cases}
\end{align*}
The result is sharp.
\end{theorem}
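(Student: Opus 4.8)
The plan is to reduce the functional to the Carath\'eodory setting and then invoke Lemma~\ref{p4lem1}. Since $f\in\mathcal{R}(\beta)$, the hypothesis $\RE f'(z)>\beta$ says that $\big(f'(z)-\beta\big)/(1-\beta)$ has positive real part and value $1$ at the origin, so there is a function $p(z)=1+\sum_{k=1}^{\infty}c_kz^k\in\mathcal{P}$ with $f'(z)=\beta+(1-\beta)p(z)$. Comparing the coefficient of $z^{k}$ on both sides of
\begin{align*}
1+\sum_{n=2}^{\infty}na_nz^{n-1}=1+(1-\beta)\sum_{k=1}^{\infty}c_kz^{k}
\end{align*}
yields $na_n=(1-\beta)c_{n-1}$, that is, $a_n=(1-\beta)c_{n-1}/n$ for every $n\geq2$. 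This linear dictionary between the $a_n$ and the $c_k$ is the whole point of the reduction.

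First I would substitute these expressions into the functional and factor out the common scalar. Using $a_n=(1-\beta)c_{n-1}/n$, $a_m=(1-\beta)c_{m-1}/m$ and $a_{n+m-1}=(1-\beta)c_{n+m-2}/(n+m-1)$, a direct computation gives
\begin{align*}
\lambda a_n a_m-a_{n+m-1}=\frac{1-\beta}{n+m-1}\left(\mu\,c_{n-1}c_{m-1}-c_{n+m-2}\right),\qquad \mu:=\frac{\lambda(1-\beta)(n+m-1)}{nm}.
\end{align*}
Since $(n-1)+(m-1)=n+m-2$ and $n,m\geq2$, the index triple $(n-1,m-1,n+m-2)$ is exactly of the form required by Lemma~\ref{p4lem1}; applying that lemma with parameter $\mu$ bounds $|\mu\,c_{n-1}c_{m-1}-c_{n+m-2}|$ by $2$ when $0\le\mu\le1$ and by $2|2\mu-1|$ otherwise. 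Because $1-\beta>0$, the condition $0\le\mu\le1$ is equivalent to $0\le\lambda\le nm/((1-\beta)(n+m-1))$, which is precisely the dividing condition in the statement. In that range one obtains $2(1-\beta)/(n+m-1)$; in the complementary range, multiplying $2|2\mu-1|$ by $(1-\beta)/(n+m-1)$ and expanding $2\mu-1$ reproduces $\big|4\lambda(1-\beta)^2/(nm)-2(1-\beta)/(n+m-1)\big|$. These last steps are routine algebraic simplifications.

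For sharpness I would exhibit the two extremal functions arising from the extremal members of $\mathcal{P}$. Taking $p(z)=(1+z^{n+m-2})/(1-z^{n+m-2})$, whose only nonzero coefficients sit at positive multiples of $n+m-2$, forces $c_{n-1}=c_{m-1}=0$ and $c_{n+m-2}=2$ (here one uses $0<n-1,\,m-1<n+m-2$, valid since $n,m\ge2$), so that $|\mu\,c_{n-1}c_{m-1}-c_{n+m-2}|=2$ independently of $\lambda$; the corresponding $f$ with $f'(z)=\beta+(1-\beta)(1+z^{n+m-2})/(1-z^{n+m-2})$ then attains the first bound throughout $0\le\lambda\le nm/((1-\beta)(n+m-1))$. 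Taking $p(z)=(1+z)/(1-z)$, so that $c_k\equiv2$ and $a_n=2(1-\beta)/n$, gives $\lambda a_na_m-a_{n+m-1}=4\lambda(1-\beta)^2/(nm)-2(1-\beta)/(n+m-1)$, which attains the second bound for every $\lambda$ outside the first range (both $\lambda<0$ and $\lambda$ large positive push $\mu$ out of $[0,1]$). Each extremal function in $\mathcal{R}(\beta)$ is recovered by integrating $f'=\beta+(1-\beta)p$.

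The derivation of the inequality is essentially mechanical once the coefficient dictionary is in place; the only point requiring care is the index bookkeeping in the application of Lemma~\ref{p4lem1}, namely verifying that the chosen extremal $p$ indeed annihilates $c_{n-1}$ and $c_{m-1}$ while keeping $c_{n+m-2}$ maximal. I therefore expect the verification of sharpness of the first bound to be the main obstacle, since it is this case (the range $0<\lambda<nm/((1-\beta)(n+m-1))$) that was left open, whereas the complementary bound and the reduction itself are comparatively direct.
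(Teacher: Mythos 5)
Your proof is correct, and it reaches the stated bounds and the correct extremal functions; but it takes a genuinely different route from the paper. The paper proves this theorem via the Hermitian-form characterization of $\mathcal{P}$ (Lemma \ref{p4lem2}): it applies that criterion to $(f'-\beta)/(1-\beta)$ with the two-term sequence $z_{n-2}=\lambda(1-\beta)a_m$, $z_{n+m-3}=-n(1-\beta)/(n+m-1)$, and then inserts the elementary bound $|a_m|\le 2(1-\beta)/m$ to close the estimate. You instead exploit the fact that for $\mathcal{R}(\beta)$ the coefficient dictionary is exactly multiplicative, $a_n=(1-\beta)c_{n-1}/n$, so that $\lambda a_na_m-a_{n+m-1}$ is a constant multiple of $\mu c_{n-1}c_{m-1}-c_{n+m-2}$ with $\mu=\lambda(1-\beta)(n+m-1)/(nm)$, and Lemma \ref{p4lem1} applies verbatim (the index triple $(n-1,m-1,n+m-2)$ is admissible since $n,m\ge 2$). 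Your reduction, the translation of $0\le\mu\le 1$ into the stated range of $\lambda$, and both extremal functions (which coincide with the paper's $f_0$ and $(1-\beta)\int_0^z(1+t^{n+m-2})/(1-t^{n+m-2})\,dt+\beta z$) all check out. What the comparison buys: your argument is shorter and self-contained for this particular class, since the functional splits cleanly into a single Carath\'eodory functional; the paper's Hermitian-form technique is heavier here but is the one that carries over to Theorems \ref{p4thm3}--\ref{p4thm5}, where $a_n$ is a partial sum of the $c_k$ rather than a single multiple and no direct reduction to Lemma \ref{p4lem1} of the whole functional is available. One stylistic remark: your closing sentence speculates that sharpness of the first bound is the ``main obstacle,'' but in your own argument it is immediate once you note $c_{n-1}=c_{m-1}=0$ for the lacunary extremal $p$; the historically open part was the validity of the inequality on $0<\lambda<nm/((1-\beta)(n+m-1))$, which your use of Lemma \ref{p4lem1} settles at once.
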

\begin{proof}
Since $f \in \mathcal{R}(\beta)$, $\big(f'(z)-\beta\big)/(1-\beta)= 1+ \sum_{n=1}^{\infty} (n+1) a_{n+1}/(1-\beta) z^n \in \mathcal{P}$ which gives
\begin{align}
|a_n| \leq \dfrac{2(1-\beta)}{n}\quad  (n \geq 2). \label{p4eqll}
\end{align}
Clearly, the bounds are sharp for the function  $f_0: \mathbb{D} \to \mathbb{C}$ defined by
\begin{align}
f_0(z)=(1-\beta)\int_0^z \dfrac{1+t}{1-t} \, dt + \beta z. \label{p4eqn}
\end{align}
For  fixed $n,m =2,3,\ldots$, choose the sequence $\{z_k\}$ of complex numbers  by  $z_{n-2}= \lambda (1-\beta)a_m$, $ z_{n+m-3}=-n(1-\beta)/(n+m-1)$, $z_k = 0$ for all $ k \neq n-2, n+m-3$. An application of  Lemma \ref{p4lem2} to the function $(f'-\beta)/(1-\beta) \in \mathcal{P}$  gives
\begin{align*}
&n^2\left|\lambda a_n a_m - a_{n+m-1}\right|^2 \\
&\leq \left|\left(2 \lambda (1-\beta) - \dfrac{mn}{n+m-1} \right) a_m \right|^2 - \left|\dfrac{mn}{n+m-1} a_m \right|^2 + \dfrac{4n^2 (1-\beta)^2}{(n+m-1)^2}\\
&= 4\lambda(1-\beta)\left(\lambda (1-\beta)-\dfrac{mn}{n+m-1}\right)|a_m|^2 + \dfrac{4n^2 (1-\beta)^2}{(n+m-1)^2}.
\end{align*}
By using  the bounds  given by \eqref{p4eqll} in the above inequality, we have
\begin{align*}
\left|\lambda a_n a_m - a_{n+m-1}\right|^2
&\leq
\begin{cases} \dfrac{4 (1-\beta)^2}{(n+m-1)^2}, &\text{$0 \leq \lambda \leq \dfrac{n m }{(1-\beta)(n+m-1)} $;}\\
 \left( \dfrac{4 \lambda(1-\beta)^2}{n m } - \dfrac{2(1-\beta)}{n+m-1}\right)^2, &\text{elsewhere.}
\end{cases}
\end{align*}
For $0 \leq \lambda \leq n m/\big((1-\beta)(n+m-1)\big)$, the inequality is sharp for the function $f(z)= (1-\beta)\int_0^z(1+t^{n+m-2})/(1-t^{n+m-2}) \, dt + \beta z$.

For $\lambda\leq 0$ or $\lambda \geq n m/\big((1-\beta)(n+m-1)\big)$, the inequality is sharp for the function $f_0$ given by the equation \eqref{p4eqn}.
\end{proof}
For $\beta=0$ and $0 < \lambda \leq 4/3$, the above theorem was proved in \cite{efraimidis2014generalized} by maximizing the real valued functional $\RE (\lambda a_n^2 - a_{2n-1})$ over $\mathcal{R}(\beta)$. Also, we have the following simple result.
\begin{corollary}
If the analytic  function $f(z)=z+\sum_{n=2}^{\infty}a_n z^n$ satisfies $\RE{\left(f(z)/z\right)} > \beta$ $(\beta <1)$ in $\mathbb{D}$, then for all $n,m=2, 3, \ldots$,
\begin{align*}
\left|\lambda a_n a_m - a_{n+m-1}\right| \leq \begin{cases} 2(1-\beta), &\text{$ 0 \leq \lambda \leq \dfrac{1}{(1-\beta)}$;}\\
2(1-\beta)\left|2\lambda (1-\beta) - 1\right|, &\text{elsewhere.}
\end{cases}
\end{align*}
The result is sharp.
\end{corollary}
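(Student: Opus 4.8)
The plan is to reduce the problem to the Carath\'eodory class and invoke Lemma~\ref{p4lem1} directly; unlike Theorem~\ref{p4thm2}, no Hermitian form is needed here because the coefficient relation is free of the nuisance factor $n$. Since $f(z)/z = 1 + \sum_{k=1}^{\infty} a_{k+1} z^{k}$ and $\RE\bigl(f(z)/z\bigr) > \beta$ with $\beta < 1$, the function
\[
p(z) := \frac{f(z)/z - \beta}{1-\beta} = 1 + \sum_{k=1}^{\infty} \frac{a_{k+1}}{1-\beta}\, z^{k}
\]
satisfies $p(0)=1$ and $\RE p(z) > 0$, hence $p \in \mathcal{P}$. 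Writing $p(z) = 1 + \sum_{k=1}^{\infty} c_k z^k$ and comparing coefficients yields $a_n = (1-\beta)\,c_{n-1}$ for every $n \geq 2$.

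The next step is to substitute this relation into the functional. The essential observation is that the indices line up, since $(n-1)+(m-1) = (n+m-1)-1$, giving
\[
\lambda a_n a_m - a_{n+m-1} = (1-\beta)\bigl[\lambda(1-\beta)\,c_{n-1}c_{m-1} - c_{(n-1)+(m-1)}\bigr].
\]
Setting $\mu := \lambda(1-\beta)$ and noting that $n-1, m-1 \in \mathbb{N}$ because $n,m \geq 2$, Lemma~\ref{p4lem1} applies verbatim to the bracketed expression with the pair of indices $n-1$ and $m-1$. I would then translate the two regimes back in terms of $\lambda$: as $1-\beta>0$, the condition $0 \leq \mu \leq 1$ is equivalent to $0 \leq \lambda \leq 1/(1-\beta)$, where the bound is $2$, while elsewhere the bound is $2|2\mu-1| = 2|2\lambda(1-\beta)-1|$. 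Multiplying through by $1-\beta$ reproduces exactly the two stated cases.

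For sharpness I would exhibit the extremal functions inherited from Lemma~\ref{p4lem1}. Taking $p(z) = (1+z^{\,n+m-2})/(1-z^{\,n+m-2})$ forces $c_{n-1}=c_{m-1}=0$ and $c_{n+m-2}=2$ (valid since $n-1, m-1 < n+m-2$), whence the bracket equals $-2$ and $|\lambda a_n a_m - a_{n+m-1}| = 2(1-\beta)$, settling the first range; the corresponding extremal is $f(z)=\beta z + (1-\beta)z\,p(z)$. Taking $p(z)=(1+z)/(1-z)$, i.e. $a_n = 2(1-\beta)$ for all $n$, gives $|\lambda a_n a_m - a_{n+m-1}| = 2(1-\beta)\,|2\lambda(1-\beta)-1|$ and handles the remaining range.

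Since every step is an equality-preserving normalization followed by a direct citation of Lemma~\ref{p4lem1}, there is essentially no analytic obstacle; the only point demanding care is the index bookkeeping $(n-1)+(m-1)=(n+m-1)-1$ together with the verification that $n-1,m-1 \geq 1$, so that Lemma~\ref{p4lem1} is applied with genuine natural-number indices.
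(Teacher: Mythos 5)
Your proof is correct and is exactly the intended argument: the paper states this corollary without proof as a ``simple result,'' and the route it has in mind is precisely your reduction $a_n=(1-\beta)c_{n-1}$ to the Carath\'eodory class followed by a direct application of Lemma~\ref{p4lem1} with $\mu=\lambda(1-\beta)$. Your index bookkeeping and the two extremal choices $p(z)=(1+z^{n+m-2})/(1-z^{n+m-2})$ and $p(z)=(1+z)/(1-z)$ are all in order.
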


The following theorem   generalizes \cite[Theorem 3.1, p.\ 335]{MR1694809} which was proved for $\lambda=1$  by induction on $n$ and $m$. Although, it can  be proved  by induction  on $n$ and $m$ but here, we are giving it as an application of the Hermitian form for typically real functions.
\begin{theorem}\label{p4thm3}
If $f(z)=z+\sum_{n=2}^{\infty}a_n z^n \in T$ and $\lambda \geq 1$, then
\begin{itemize}
\item[$(i)$] if $n=2$ and $m$ is even,   the upper bound of $|\lambda a_n a_m - a_{n+m-1}|$ is
\begin{itemize}
\item[$(a)$] $3+(2 \lambda-1)(m-2)$ for $1 \leq \lambda \leq 3/2$,
\item[$(b)$] $2 \lambda m-m-1$ for $\lambda \geq 3/2$;
\end{itemize}
\item[$(ii)$] if $m=2$ and $n$ is even, the upper bound of $|\lambda a_n a_m - a_{n+m-1}|$ is
\begin{itemize}
\item[$(a)$] $3+(2 \lambda-1)(n-2)$ for $1 \leq \lambda \leq 3/2$,
\item[$(b)$] $ 2 \lambda n-n-1$ for $\lambda \geq 3/2$;
\end{itemize}
\item[$(iii)$] in the other cases,  we have
\begin{align*}
 |\lambda a_n a_m - a_{n+m-1}|  \leq  \lambda m n -n -m +1.
\end{align*}
\end{itemize}
The bounds given by  (i)(b), (ii)(b) and (iii) are sharp whereas the bounds in (i)(a) and (ii)(a) are sharp for  $\lambda=1$ or the case when $n=2$ and  $m=2$.
%\begin{align*}
%%\left|\lambda a_n a_m - a_{n+m-1}\right| \leq .
%%\begin{cases} (2 \lambda-1) m  + 1, &\text{if  $ \lambda >$n=2$ and  $m$   is  even;}\\
%%(2 \lambda-1) n + 1, &\text{if  $m=2$ and  $n$   is  even;}\\
%% , &\text{elsewhere.}
%%\end{cases}
%\end{align*}

\end{theorem}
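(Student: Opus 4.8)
The plan is to exploit the representation behind Lemma~\ref{p4lem2.1}: every $f\in T$ has the form $f(z)=\int_{-1}^{1} z/(1-2tz+z^2)\,d\mu(t)$ for a probability measure $\mu$ on $[-1,1]$, so that $a_n=\int_{-1}^{1}U_{n-1}(t)\,d\mu(t)$, where $U_k$ is the Chebyshev polynomial of the second kind; equivalently, $p(z):=(1-z^2)f(z)/z\in\mathcal{P}_{\mathbb{R}}$ has coefficients $c_k=a_{k+1}-a_{k-1}$, and Lemma~\ref{p4lem2.1} is exactly the assertion that the Hermitian form of Lemma~\ref{p4lem2} in these $c_k$ is positive semidefinite (note $a_{k+2}-a_k=c_{k+1}$). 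I would then run the same machine as in Theorem~\ref{p4thm2}: feed a finitely supported sequence $\{z_k\}$ (whose entries may depend on the fixed coefficients of $f$) into this form, read off a quadratic inequality for $\lambda a_n a_m-a_{n+m-1}$, and close with the sharp bound $|a_k|\le k$, which holds on $T$ with equality for the Koebe function $k_0(z)=z/(1-z)^2\in T$ (where $\mu=\delta_1$ and $a_k=k$).

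For the generic case (iii) I would place the support of $\{z_k\}$ at the two indices attached to $a_{n+m-1}$ and to the product $a_n a_m$, mirroring the choice of the indices $n-2$ and $n+m-3$ in Theorem~\ref{p4thm2}. A naive triangle-inequality estimate only gives $\lambda nm+(n+m-1)$, so the essential role of the Hermitian form is that its negative $-|B_j|^2$ terms provide the cancellation lowering this to $\lambda nm-(n+m-1)$. Since $\lambda\ge 1$ the extremum should occur when every coefficient saturates $|a_k|\le k$, and the telescoping $c_k=a_{k+1}-a_{k-1}$ then forces $a_k=k$ for all $k$, i.e. $f=k_0$, which I would exhibit as the sharpness witness.

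The special cases (i) and (ii) are driven by a parity phenomenon. When $n=2$ and $m$ is even, $a_2$ (indeed $a_2=c_1$) and $a_m$ depend only on the odd-indexed $c_k$, whereas $a_{n+m-1}=a_{m+1}$ depends only on the even-indexed $c_k$, so the product term and the subtracted term decouple. Consequently the symmetric function $f(z)=\tfrac12\big(z/(1-z)^2+z/(1+z)^2\big)$, for which $\mu=\tfrac12(\delta_1+\delta_{-1})$, $a_k=k$ for odd $k$ and $a_k=0$ for even $k$, annihilates $a_2a_m$ while keeping $|a_{m+1}|=m+1$ maximal; this is what raises the bound above $\lambda nm-(n+m-1)$. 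I expect the Hermitian-form estimate here to collapse, after isolating $a_2=c_1$, to a Lemma~\ref{p4lem1}-type dichotomy whose effective multiplier is an affine function of $\lambda$ crossing its breakpoint at $\lambda=3/2$; this yields the two regimes (a) and (b). The Koebe configuration is sharp for $\lambda\ge 3/2$, giving (b)$=2\lambda m-m-1$, and at the endpoint $\lambda=1$ the symmetric function realizes (a)$=m+1$, while for $1<\lambda<3/2$ the value (a) is only an upper estimate; the degenerate case $n=m=2$ is where the branches again coincide, accounting for the extra sharpness claim.

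The main obstacle I anticipate is precisely the quadratic term $a_na_m$. In Theorem~\ref{p4thm2} the proportionality $a_n\propto c_{n-1}$ made $a_n$ a single $\mathcal{P}$-coefficient, so a two-term sequence collapsed the Hermitian form neatly onto the functional. Here $a_n=\sum c_k$ is a partial sum over $c_k$ of a fixed parity, so $a_na_m$ expands as a double sum $\sum_{i,j}c_ic_j$ and the form does not telescope automatically; the sequence $\{z_k\}$ must be engineered so that the surviving cross terms reproduce $\lambda a_na_m-a_{n+m-1}$ exactly, with no positive slack wasted, and so that the discarded pieces of the positive semidefinite form are genuinely nonnegative. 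Tracking which $c_k$ overlap is exactly the parity bookkeeping that separates (iii) from (i)/(ii); correctly locating the crossover $\lambda=3/2$ and confirming that (i)(a)/(ii)(a) are sharp only at $\lambda=1$ (and at $n=m=2$) is the delicate endpoint of the argument. A more computational alternative is induction on $n$ and $m$, as in Ma's original $\lambda=1$ proof, but I would prefer the Hermitian-form route as the cleaner path.
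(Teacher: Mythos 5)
Your setup is the right one (the factorization $f(z)=\bigl(z/(1-z^2)\bigr)p(z)$ with $p\in\mathcal{P}_{\mathbb{R}}$, the bound $|a_k|\le k$, the parity split, and the correct extremal functions), and you have correctly located the hard point: since $a_n$ is a partial sum of the $c_k$ rather than a single coefficient, a two-point sequence fed into the Hermitian form cannot reproduce $\lambda a_na_m-a_{n+m-1}$ the way it does in Theorem \ref{p4thm2}. But you leave exactly that point unresolved --- ``the sequence must be engineered so that the surviving cross terms reproduce the functional exactly'' is a restatement of the problem, not its solution, and it is doubtful that a single sequence can do this while keeping the discarded $-|B_j|^2$ pieces under control. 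The paper's resolution is structurally different and your plan does not find it: with $z_{n-2}=\lambda a_m$, $z_{n+m-3}=-1$ the Hermitian form bounds the \emph{increment} $\bigl|(\lambda a_na_m-a_{n+m-1})-(\lambda a_{n-2}a_m-a_{n+m-3})\bigr|\le 2(\lambda m-1)$, and one then telescopes in $n$ in steps of $2$ down to a base case. This shifts the whole weight of the proof onto the base cases $n=1$ ($n$ odd), $n=2$ ($m$ odd, and the exceptional cases (i)--(ii)), and $n=4$ (both $n,m$ even), each handled by expanding the functional in the $c_k$ via \eqref{p4eq2}, applying Lemma \ref{p4lem1} termwise, and invoking the Brown--Tsao inequalities $\lambda a_2^2-a_3\le 4\lambda-3$ and $\lambda a_4^2-a_7\le 16\lambda-7$ as external input. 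None of the telescoping, the base cases, or the need for these two auxiliary inequalities appears in your proposal, so the central mechanism is missing.

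Two smaller points. The breakpoint $\lambda=3/2$ in (i) does not come from a Lemma \ref{p4lem1}-type dichotomy in the tail terms (for $\lambda\ge 1$ each of those contributes $2(2\lambda-1)$ regardless); it comes from the two-sided bound $|\lambda a_2^2-a_3|\le\max\{3,\,4\lambda-3\}$, and $4\lambda-3\ge 3$ exactly when $\lambda\ge 3/2$. On the other hand, your parity heuristic for why $n=2$ with $m$ even is exceptional, and your sharpness witnesses ($k(z)=z/(1-z)^2$ for (i)(b), (ii)(b), (iii), and $z(1+z^2)/(1-z^2)^2$ at $\lambda=1$ for (i)(a), (ii)(a)) do agree with the paper.
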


\begin{proof}
For  fixed  $n, m=2,3,\ldots$, choose the sequence $\{z_k\}$ of real numbers  by  $z_{n-2}= \lambda a_m$, $ z_{n+m-3}=-1$, $z_k = 0$ for all $ k \neq n-2, n+m-3$. Since $f \in T$, $|a_n| \leq n$ $(n \geq 2)$. So,  by using Lemma \ref{p4lem2.1} to the function $f \in T$, we have
\begin{align}
\left|(\lambda a_n a_m - a_{n+m-1})-(\lambda a_{n-2} a_m - a_{n+m-3})\right|^2
&\leq |(2 \lambda- 1)a_m + a_{m-2}|^2 -|a_m -a_{m-2}|^2 + 4 \notag \\
&= 4 \lambda(\lambda-1) a_m^2 + 4 \lambda a_m a_{m-2} +4 \notag\\
&\leq 4(\lambda m -1)^2.  \label{p4eq1.1}
\end{align}
 Since $f \in T$, therefore $f(z)=\big(z/(1-z^2)\big)p(z)$ for some $p(z)= 1 + \sum_{n=1}^{\infty} c_n z^n \in \mathcal{P}_{\mathbb{R}}$.  This gives
\begin{align}
a_{2k}= c_1 + c_3 + \cdots + c_{2k-1} \quad \text{and}\quad
a_{2k+1}= 1 + c_2 + c_4 + \cdots + c_{2k}. \label{p4eq2}
\end{align}
By \cite[Theorem 1, p.\ 468]{MR824446}, we have $\lambda a_2^2 - a_3 \leq 4 \lambda-3$. Clearly, $\lambda a_2^2 - a_3 \geq -a_3 \geq -3$.  Also, we observe that  $1 \leq \lambda \leq  3/2$ is equivalent to  $1 \leq 4 \lambda -3 \leq 3$. Therefore, we have
\begin{align}
| \lambda a_2^2 - a_3| \leq
\begin{cases}
3, &\text{for $1 \leq \lambda \leq  3/2,$}\\
4 \lambda-3, &\text{for $\lambda \geq 3/2.$} \label{p4eq3.10}
\end{cases}
\end{align}
The first inequality  in  \eqref{p4eq3.10} is sharp for the function $f(z)=z(1+z^2)/(1-z^2)^2$ and the second inequality holds for the Koebe  function $k(z)=z/(1-z)^2$.
If $n=2$ and $m=2k$ $(k \geq 2)$, then
\begin{align}
|\lambda a_2 a_{m} - a_{m+1}|&= |\lambda a_2 a_{2k} - a_{2k+1}|\notag\\
&= |\lambda c_1 (c_1 + c_3 + \cdots + c_{2k-1}) - (1 + c_2 + c_4 + \cdots + c_{2k})|\notag\\
&= |(\lambda c_1^2 -c_2-1) + (\lambda c_1 c_3 -c_4) + \cdots + (\lambda c_1 c_{2k-1} -c_{2k})|\notag \\
&= |(\lambda a_2^2 -a_3) + (\lambda c_1 c_3 -c_4) + \cdots + (\lambda c_1 c_{2k-1} -c_{2k})|.\label{p4eq3.11}
\end{align}
An application  of Lemma \ref{p4lem1}  and the inequality  \eqref{p4eq3.10} in the equation \eqref{p4eq3.11} gives
\begin{align}
|\lambda a_2 a_{m} - a_{m+1}| \leq
\begin{cases}
3+(2 \lambda-1)(m-2), &\text{for $1 \leq \lambda \leq  3/2;$}\\
 2 \lambda m-m -1, &\text{for $\lambda \geq 3/2.$} \notag
\end{cases}
\end{align}
This proves $(i)$.

When $m=2$   and $n$ is even, the desired bounds in $(ii)$ follow  by interchanging the roles of $n$ and $m$ in the equation \eqref{p4eq3.11} and in the  above inequality. For $\lambda=1$, the sharpness in $(i)(a)$ and $(ii)(a)$ follow for the function $f(z)=z(1+z^2)/(1-z^2)^2$. Now, it is left to prove the inequality in the case $(iii)$.
%Interchanging the roles of $m$ and $n$ yields the second inequality. Now, it is left to prove the third inequality. First, we consider the case when both  $n>2$ and $m$ are even.
Since $\lambda a_4^2 - a_7  \leq 16 \lambda-7$ \cite[Theorem 1, p.\ 468]{MR824446} and clearly $\lambda a_4^2 - a_7  \geq -7 \geq -9 \geq -(16 \lambda-7)$, we have
\begin{align}
|\lambda a_4^2 - a_7|  \leq 16 \lambda-7. \label{p4eq3.12}
\end{align}
  For $n=4 $ and  $m=2k$ $(k \geq 3)$, by proceeding as in the  equation \eqref{p4eq3.11},  we have
\begin{align}
&|\lambda a_4 a_{m} - a_{m+3}| \notag \\
&= |\lambda a_4 a_{2k} - a_{2k+3}|\notag\\
&= |\lambda (c_1+ c_3) (c_1 + c_3 + \cdots + c_{2k-1}) - (1 + c_2 + c_4 + \cdots + c_{2(k+1)})|\notag\\
&= |(\lambda a_4^2 -a_7)+ \lambda c_1 (c_5 +  \cdots + c_{2k-1}) + (\lambda c_3 c_5 -c_8) + \cdots + (\lambda c_3 c_{2k-1} -c_{2k+2})|.\notag
\end{align}
An application  of Lemma \ref{p4lem1}  and  the inequality  \eqref{p4eq3.12} in the above equation  gives
\begin{align}
|\lambda a_4 a_{m} - a_{m+3}| &\leq  4  \lambda m -m -3. \label{p4eq3.01}
\end{align}
Therefore, if  $n$,  $m$ are even and  $n >4$, $m>2$,  then
\begin{align}
|\lambda a_n a_{m} - a_{n+m-1}| &\leq  |(\lambda a_n a_m - a_{n+m-1})-(\lambda a_{n-2} a_m - a_{n+m-3})| \notag  \\
&\quad{}+ |(\lambda a_{n-2} a_m - a_{n+m-3})-(\lambda a_{n-4} a_m - a_{n+m-5})|  + \cdots \notag \\
&\quad{}+  |(\lambda a_{6} a_m - a_{m+5})-(\lambda a_{4} a_m - a_{m+3})| + |\lambda a_{4} a_m - a_{m+3}|.\label{p4eq3.011}
\end{align}
In view of \eqref{p4eq1.1}, \eqref{p4eq3.12}  and \eqref{p4eq3.01}, we have
\begin{align*}
|\lambda a_n a_{m} - a_{n+m-1}| \leq   (\lambda m -1)(n-4) + 4 m\lambda- m -3 = \lambda m n -m -n +1.
\end{align*}
Next, we consider the case when  $n$ is even  and  $m$ is  odd. If $n=2$ and $m=2k+1$ $(k \geq 1)$, then by proceeding similarly as in the equation   \eqref{p4eq3.11} and applying Lemma \ref{p4lem1}, we obtain
\begin{align}
|\lambda a_2 a_{m} - a_{m+1}|&= |\lambda a_2 a_{2k+1} - a_{2k+2}|\notag\\
&= |\lambda c_1 (1 + c_2 + c_4 + \cdots + c_{2k}) - (c_1 + c_3 + \cdots + c_{2k+1})|\notag\\
&\leq m(2 \lambda  -1 )-1.\label{p4eq4}
\end{align}
If  $n=2k$ $(k > 1)$ and  $m$ is odd, then by proceeding as in the inequality \eqref{p4eq3.011} and applying  \eqref{p4eq1.1} and \eqref{p4eq4}, we have
\begin{align*}
|\lambda a_n a_{m} - a_{n+m-1}| \leq  2(\lambda m -1)(k-1) + m(2 \lambda-1)-1 = \lambda m n -m -n +1.
\end{align*}
%When $n$ is odd  and $m$ is even, the result follows similarly by interchanging the roles of $n$ and $m$.
Finally, we  consider the case when  $n$ is odd. In this case, we have
\begin{align}
|\lambda a_n a_{m} - a_{n+m-1}| &\leq  |(\lambda a_n a_m - a_{n+m-1})-(\lambda a_{n-2} a_m - a_{n+m-3})| \notag  \\
&\quad{}+ |(\lambda a_{n-2} a_m - a_{n+m-3})-(\lambda a_{n-4} a_m - a_{n+m-5})|  + \cdots \notag \\
&\quad{}+  |(\lambda a_{3} a_m - a_{m+2})-(\lambda a_{1} a_m - a_{m})| + |\lambda a_{1} a_m - a_{m}| \quad (a_1=1).\notag
\end{align}
Using inequality \eqref{p4eq1.1} and  the bound of $|a_m|$ in the above inequality, we obtain
\begin{align*}
|\lambda a_n a_{m} - a_{n+m-1}| \leq  \lambda m n- m -n +1.
\end{align*}
The  sharpness in the cases $(i)(b)$, $(ii)(b)$ and $(iii)$ follow for the Koebe  function $k(z)=z/(1-z)^2$.
\end{proof}
For $\lambda=1$, the following result is given in \cite[Theorem 3.2, p.\ 338]{MR1694809}.
\begin{corollary}
If $f(z)=z+\sum_{n=2}^{\infty}a_n z^n \in \mathcal{S}_{\mathbb{R}}$ and $\lambda \geq 1$, then for $n,m=2,3,\ldots$, $$|\lambda a_n a_m - a_{n+m-1}|  \leq \lambda m n -n -m +1.$$ The result is sharp.
\end{corollary}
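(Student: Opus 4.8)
The plan is to derive the corollary from Theorem \ref{p4thm3} by exploiting the classical inclusion $\mathcal{S}_{\mathbb{R}} \subset T$, and then to refine only the two cases in which the typically real bound of Theorem \ref{p4thm3} falls short of $\lambda mn-n-m+1$. First I would record that every univalent function with real coefficients is typically real: having real coefficients it is real on $(-1,1)$, and since $f'(0)=1>0$ univalence forces it to carry the upper half-disc onto a half-plane. Hence any $f\in\mathcal{S}_{\mathbb{R}}$ inherits all the conclusions of Theorem \ref{p4thm3}.

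Comparing those conclusions with the target bound shows where work is actually needed. In case (iii) the estimate is already exactly $\lambda mn-n-m+1$, and in cases (i)(b) and (ii)(b), valid for $\lambda\ge 3/2$, the bounds $2\lambda m-m-1$ and $2\lambda n-n-1$ coincide with $\lambda mn-n-m+1$ when $n=2$ and $m=2$ respectively. So for these cases nothing further is required. The only gap is cases (i)(a) and (ii)(a) on the range $1\le\lambda\le 3/2$, where Theorem \ref{p4thm3} only delivers $3+(2\lambda-1)(m-2)$; this overshoots $2\lambda m-m-1$ precisely because the typically real estimate $|\lambda a_2^2-a_3|\le 3$ used in \eqref{p4eq3.10} is not sharp on the univalent subclass.

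The key step is therefore to replace \eqref{p4eq3.10} by the stronger inequality $|\lambda a_2^2-a_3|\le 4\lambda-3$, valid for every $\lambda\ge 1$ and every $f\in\mathcal{S}$. The upper estimate $\lambda a_2^2-a_3\le 4\lambda-3$ is already available from \cite{MR824446}; the matching lower estimate $a_3-\lambda a_2^2\le 4\lambda-3$ is exactly the sharp Fekete--Szeg\H{o} bound for $\mathcal{S}$ on the range $\mu=\lambda\ge 1$ (for merely typically real functions one only has $a_3-\lambda a_2^2\le 3$, which is why the weaker constant appeared before). Feeding this into the telescoping identity \eqref{p4eq3.11} and bounding each of the $k-1$ remaining terms $\lambda c_1 c_{2j-1}-c_{2j}$ by $2(2\lambda-1)$ via Lemma \ref{p4lem1} gives, for $n=2$ and $m=2k$,
\begin{align*}
|\lambda a_2 a_m-a_{m+1}|\le (4\lambda-3)+(m-2)(2\lambda-1)=2\lambda m-m-1=\lambda mn-n-m+1,
\end{align*}
and the same computation with $n$ and $m$ interchanged settles case (ii)(a). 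Combined with cases (iii), (i)(b) and (ii)(b), this produces the uniform bound in all situations.

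Finally, sharpness is immediate from the Koebe function $k(z)=z/(1-z)^2\in\mathcal{S}_{\mathbb{R}}$, whose coefficients $a_j=j$ yield $\lambda a_n a_m-a_{n+m-1}=\lambda nm-(n+m-1)=\lambda mn-n-m+1$. I expect the only genuine obstacle to be the supply of the refined coefficient inequality $|\lambda a_2^2-a_3|\le 4\lambda-3$ over the full range $\lambda\ge 1$: on the typically real class it fails for $1\le\lambda<3/2$, since the extremal $z(1+z^2)/(1-z^2)^2$ (which forces $a_2=0$, $a_3=3$) is typically real but not univalent—no univalent function can attain $a_3=3$ unless it is a rotation of Koebe with $a_2=2$. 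Thus one must genuinely invoke univalence, through Fekete--Szeg\H{o}, to recover the smaller constant, after which everything else is a verbatim re-run of the proof of Theorem \ref{p4thm3}.
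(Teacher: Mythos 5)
Your argument is correct and follows essentially the same route as the paper: deduce everything except the $n=2$ (or $m=2$), even-index case from Theorem \ref{p4thm3} via $\mathcal{S}_{\mathbb{R}}\subset T$, and repair that case by using univalence to sharpen \eqref{p4eq3.10} to $|\lambda a_2^2-a_3|\le 4\lambda-3$ before rerunning the telescoping argument \eqref{p4eq3.11}. The only cosmetic difference is that you quote the Fekete--Szeg\H{o} bound at parameter $\mu=\lambda\ge 1$ directly, whereas the paper uses only the $\mu=1$ case $|a_2^2-a_3|\le 1$ together with $\lambda a_2^2-a_3\ge a_2^2-a_3\ge -1\ge -(4\lambda-3)$; both yield the same two-sided estimate.
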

\begin{proof}
Since  $\mathcal{S}_{\mathbb{R}} \subset  \mathcal{S}$, by  using \cite[Theorem 2, p.\ 35]{MR704183},    we have
\begin{align}
|a_2^2 - a_{3}| \leq 1. \label{p4eqs}
\end{align}
Also,  $\mathcal{S}_{\mathbb{R}} \subset T$, therefore for $\lambda \geq 1$,  by  \cite[Theorem 1, p.\ 468]{MR824446},  we have $\lambda a_2^2-a_3 \leq 4 \lambda -3$. For $1 \leq \lambda \leq 3/2$,  an application of the inequality  \eqref{p4eqs} gives  $\lambda a_2^2-a_3 \geq a_2^2-a_3 \geq -1 \geq -(4 \lambda -3)$. Thus, in view of  the inequality \eqref{p4eq3.10}, we must have the sharp inequality
 \begin{align}
|\lambda a_2^2 - a_3| \leq 4 \lambda -3 \label{p4eql}
\end{align}
where the sharpness follows for the Koebe function $k(z)=z/(1-z)^2$.
For even $m>2$, an application  of  \eqref{p4eql} and  Lemma \ref{p4lem1}  in the equation \eqref{p4eq3.11}  gives
$$ |\lambda a_2 a_{m} - a_{m+1}| \leq   2 m\lambda-m -1.$$ When $m=2$ and $n>2$ is even, the desired estimate follows by interchanging the roles of $m$ and $n$ in the above inequality. The  other cases follow immediately from the Theorem \ref{p4thm3}.  The result is sharp for the Koebe function. \qedhere

\end{proof}
\section{Generalized Zalcman conjecture for some subclasses of  close-to-convex  functions}
 Recall that the classes $\mathcal{F}_1(\beta)$ and $\mathcal{F}_2(\beta)$ $(\beta<1)$ are defined as follows:
\begin{align*}
\mathcal{F}_1(\beta)&:=\{f \in \mathcal{A}: \RE\big((1-z)f'(z)\big)> \beta\} \intertext{and}
\mathcal{F}_2(\beta)&:=\{f \in \mathcal{A}: \RE\big((1-z^2)f'(z)\big)> \beta\}.
\end{align*}
For $0 \leq \beta <1$, the classes $\mathcal{F}_1(\beta)$ and $\mathcal{F}_2(\beta)$ are subclasses of $\mathcal{C}$, the class  of close-to-convex functions.
 Define the  functions   $f_{1,\beta}: \mathbb{D} \to \mathbb{C}$ and $f_{2,\beta}: \mathbb{D} \to \mathbb{C}$,  in $\mathcal{F}_1(\beta)$ and $\mathcal{F}_2(\beta)$ respectively, by
\begin{align}
f_{1,\beta}(z)&= \dfrac{2(1-\beta)z}{1-z}+ (1-2 \beta) \log{(1-z)} \label{p4eq4.102}
\intertext{and}
f_{2,\beta}(z)&= \dfrac{z(1-\beta ) }{1-z^2}+\dfrac{\beta  }{2} \log{\left(\dfrac{1+z}{1-z}\right)}.\notag
\end{align}
Recently,  for certain positive values of  $\lambda $, the sharp estimation of $\phi(f,n,n;\lambda)$  over $\mathcal{C}$ is given in  \cite{li2016generalized} by using the fact that $\mathcal{C}$ and $\phi(f,n,n;\lambda)$ are invariant under rotations.
Note that  the classes $\mathcal{F}_1(\beta)$ and  $ \mathcal{F}_2(\beta)$ are  not necessarily invariant under rotations. For instance, $\mathcal{F}_1(0)$ and  $ \mathcal{F}_2(0)$ are not invariant under rotations  since $\RE \Big((1-z)\big(-if_{1,0}(iz)\big)'\Big)=-2$ at $z=1/2-i/2$ and $(1-z^2)\big(-if_{2,0}(iz)\big)'=(1-z^2)^2/(1+z^2)^2$  maps $\mathbb{D}$ to the whole complex plane except the negative real axis.
In this section,  for certain  positive values  of $\lambda$, we give the sharp estimation of the generalized  Zalcman coefficient functional $\phi(f,n,m;\lambda)$ when $f \in \mathcal{F}_1(\beta)$ or $f \in \mathcal{F}_2(\beta)$.
\begin{theorem}\label{p4thm4}
If  $\mu   \geq  \max{\{ nm/\big((n+m-1)(1- \beta)\big), nm/(n+m-1)\}}$  and $f(z)=z+\sum_{n=2}^{\infty}a_n z^n \in \mathcal{F}_1(\beta)$ $(\beta <1)$, then for all $n,m=2, 3, \ldots$,
\begin{align*}
&\left|\mu a_n a_m - a_{n+m-1}\right|\leq  \mu B_n B_m -B_{n+m-1},
%\dfrac{\mu \big( 1+ (2(n-1)(1-\beta)\big)\big( 1+ (2(m-1)(1-\beta)\big)}{ m n}  - \dfrac{ 1+ (2(n+m-2)(1-\beta)}{n+m-1}.
%\begin{cases} (2 \lambda-1) m  + 1, &\text{if  $n=2$ and  $m$   is  even;}\\
%(2 \lambda-1) n + 1, &\text{if  $m=2$ and  $n$   is  even;}\\
% , &\text{elsewhere.}
%\end{cases}
\end{align*}
where
\begin{align}
B_n=\dfrac{ 1+ 2 (n-1)(1-\beta)}{n} \quad (n \geq 2). \label{p4eq4.10}
\end{align}
The inequality is sharp.
\end{theorem}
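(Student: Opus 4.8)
The plan is to express every Taylor coefficient of $f$ through a single function of $\mathcal{P}$ and then to show that, once $\mu$ is large, the functional degenerates to its value at $f_{1,\beta}$. Since $f\in\mathcal{F}_1(\beta)$ is the statement $\RE\big((1-z)f'(z)\big)>\beta$, the function $p(z):=\big((1-z)f'(z)-\beta\big)/(1-\beta)=1+\sum_{k\ge1}c_kz^{k}$ belongs to $\mathcal{P}$. Comparing coefficients in $(1-z)f'(z)=1+(1-\beta)\sum_{k\ge1}c_kz^{k}$ gives $(k+1)a_{k+1}-ka_k=(1-\beta)c_k$, and summing this telescoping relation yields
\[
na_n=1+(1-\beta)\sum_{k=1}^{n-1}c_k\qquad(n\ge2).
\]
Hence $|c_k|\le2$ forces $|a_n|\le B_n$ with $B_n$ as in \eqref{p4eq4.10}, while the extreme choice $c_k\equiv2$ (that is $p(z)=(1+z)/(1-z)$, equivalently $f=f_{1,\beta}$ of \eqref{p4eq4.102}) gives $a_n=B_n$ exactly. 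Therefore the asserted bound $\mu B_nB_m-B_{n+m-1}$ is nothing but the value of $\mu a_na_m-a_{n+m-1}$ at $f_{1,\beta}$ (which is nonnegative precisely under the hypothesis on $\mu$), and the theorem reduces to proving that $f_{1,\beta}$ maximises $|\mu a_na_m-a_{n+m-1}|$; sharpness is then immediate.

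This representation shows that the coefficients are \emph{partial sums} of the $c_k$, which is exactly what makes the estimate delicate. Writing $P_r:=ra_r=1+(1-\beta)\sum_{k=1}^{r-1}c_k$ and assuming $n\le m$, one reads off the identities
\[
ma_m-na_n=(1-\beta)\sum_{k=n}^{m-1}c_k,\qquad (n+m-1)a_{n+m-1}-ma_m=(1-\beta)\sum_{k=m}^{n+m-2}c_k,
\]
the second being a tail of only $n-1$ coefficients, so that $|(n+m-1)a_{n+m-1}-ma_m|\le2(1-\beta)(n-1)$. It is tempting to split
\[
\mu a_na_m-a_{n+m-1}=\Big(\mu-\tfrac{nm}{n+m-1}\Big)a_na_m+\frac{P_nP_m-P_{n+m-1}}{n+m-1}
\]
and to bound each block through $|a_k|\le B_k$; the first block is harmless as soon as $\mu\ge nm/(n+m-1)$, but the triangle inequality applied to $P_nP_m-P_{n+m-1}$ strictly overshoots its extremal value. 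The reason is structural: for $p\in\mathcal{P}$ the tail $(1-\beta)\sum_{k=m}^{n+m-2}c_k$ is \emph{forced} to correlate positively with the partial sums defining $a_n$ and $a_m$ — for instance $c_1=2$ already forces $p(z)=(1+z)/(1-z)$ and hence all $c_k=2$ — so a genuine second-order constraint on $\mathcal{P}$, not merely $|c_k|\le2$, is indispensable.

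Accordingly the engine is the positive semi-definite Hermitian form of Lemma \ref{p4lem2}, used exactly as in the proof of Theorem \ref{p4thm2} but fed a sequence $\{z_k\}$ adapted to the cumulative structure. For $\mathcal{R}(\beta)$ each $a_k$ is proportional to a single $c_{k-1}$, so a two-point sequence sufficed; here $a_n$ is a partial sum, so I would choose $\{z_k\}$ to be piecewise constant on the index blocks $\{1,\dots,n-1\}$, $\{1,\dots,m-1\}$ and $\{1,\dots,n+m-2\}$, with the constant on the appropriate blocks proportional to a multiple of $a_m$ (respectively $a_n$) and the constant on the tail block encoding the $-a_{n+m-1}$ contribution, in direct analogy with the choice $z_{n-2}=\lambda(1-\beta)a_m$, $z_{n+m-3}=-n(1-\beta)/(n+m-1)$ of Theorem \ref{p4thm2}. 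The point of making the sequence constant on a block is that the internal linear form $\sum_k c_kz_{k+j}$ of Lemma \ref{p4lem2} then collapses to a partial sum $(1-\beta)\sum_k c_k=P_r-1$, i.e. to $ra_r-1$. Substituting and collecting these, the Hermitian inequality should take the shape $c\,|\mu a_na_m-a_{n+m-1}|^2\le(\text{explicit terms in }a_n,a_m)$ with a positive constant $c$.

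It then remains to bound the right-hand side using $|a_k|\le B_k$ and to check that it collapses to $(\mu B_nB_m-B_{n+m-1})^2$. The two quantities in the hypothesis $\mu\ge\max\{nm/((n+m-1)(1-\beta)),\,nm/(n+m-1)\}$ are expected to surface here as the two sign conditions that keep the relevant coefficients of the quadratic form nonnegative, so that replacing $|a_k|$ by $B_k$ can only increase the estimate: $\mu\ge nm/(n+m-1)$ keeps the coefficient of the product $a_na_m$ nonnegative, whereas $\mu\ge nm/((n+m-1)(1-\beta))$ — the binding one precisely when $0<\beta<1$ — keeps the coefficient multiplying the $(1-\beta)$-weighted tail nonnegative. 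I expect the main obstacle to be exactly this third step: pinning down the piecewise-constant sequence $\{z_k\}$ so that Lemma \ref{p4lem2} reproduces the intended combination with a controllable remainder, and then running the short case analysis in $\beta$ that produces the $\max$ in the threshold. Everything afterwards — the substitution $P_r-1=(1-\beta)\sum c_k$, the insertion of $|a_k|\le B_k$, and the identification of the extremal function with $f_{1,\beta}$ — is routine once the sequence is fixed.
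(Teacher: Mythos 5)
Your setup is correct and matches the paper: the representation $na_n=1+(1-\beta)\sum_{k=1}^{n-1}c_k$ with $p=((1-z)f'(z)-\beta)/(1-\beta)\in\mathcal{P}$, the bound $|a_n|\le B_n$, the identification of $f_{1,\beta}$ as the extremal function, and the recognition that the naive triangle inequality on partial sums overshoots so that Lemma \ref{p4lem2} must be invoked. But the step you yourself flag as the main obstacle --- choosing the sequence $\{z_k\}$ --- is where the proposal goes wrong, and it is the heart of the proof. Your proposed piecewise-constant block sequence does not do what you claim: Lemma \ref{p4lem2} sums the Hermitian form over all shifts $j$, so a sequence that is constant on a block of length $N$ produces one term for \emph{each} $j=0,\dots,N-1$, each involving a different intermediate partial sum $\sum_{k\le N-1-j}c_k$ (i.e.\ essentially every coefficient $a_2,\dots,a_N$ at once, with competing signs). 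The form does not ``collapse to a partial sum $P_r-1$''; it produces a long quadratic expression in all the $a_k$ that there is no evident way to reduce to $(\mu B_nB_m-B_{n+m-1})^2$.

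The paper's resolution is different and you did not find it: it keeps exactly the same \emph{two-point} sequence as in Theorem \ref{p4thm2} ($z_{n-2}=\lambda(1-\beta)a_m$, $z_{n+m-3}=-(1-\beta)$, all else zero). Because now $c_k\propto (k+1)a_{k+1}-ka_k$ is a \emph{difference} rather than a single coefficient, the Hermitian form bounds not the functional itself but the increment
\[
\bigl|\bigl(\lambda n a_na_m-(n+m-1)a_{n+m-1}\bigr)-\bigl(\lambda(n-1)a_{n-1}a_m-(n+m-2)a_{n+m-2}\bigr)\bigr|
\le 2(1-\beta)(\lambda B_m-1),
\]
valid once $\lambda\ge\max\{m/(1-\beta),m\}$ (this is where the two conditions in the $\max$ enter, killing the $|a_m|^2$ and $\RE a_m\overline{a_{m-1}}$ terms via $|a_k|\le B_k$). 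One then \emph{telescopes} this increment from $k=n$ down to $k=1$, uses $|\lambda a_1a_m-ma_m|\le(\lambda-m)B_m$ for the base term, and substitutes $\mu=\lambda n/(n+m-1)$; the sum $2(1-\beta)(\lambda B_m-1)(n-1)+(\lambda-m)B_m$ simplifies exactly to $(n+m-1)(\mu B_nB_m-B_{n+m-1})$. Without this telescoping idea (or a working substitute for it) the proof is not complete, so the proposal has a genuine gap at its central step.
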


\begin{proof}
Let $g(z):=(1-z) f'(z)$. Since $f \in \mathcal{F}_1(\beta)$, therefore $$\dfrac{g(z)-\beta}{1-\beta}= 1 + \sum_{n=1}^{\infty} c_n z^n \in \mathcal{P},$$ which gives
\begin{align}
c_n&= \dfrac{(n+1) a_{n+1} - n a_n}{1-\beta} \quad (n \geq 1)\notag  \intertext{and}
a_n&= \dfrac{1 + (1-\beta)(c_1 + c_2 + \cdots+ c_{n-1})}{n} \quad (n \geq 2). \label{p4eq4.11}
\end{align}
Since $|c_n| \leq 2$ $(n \geq 1)$, the equation  \eqref{p4eq4.11} gives
\begin{align}
|a_n| \leq B_n, \label{p4eq4.12}
\end{align} where $B_n$ is given by the equation \eqref{p4eq4.10}.
%Define $f_1: \mathbb{D} \to \mathbb{C}$ such that
%\begin{align}
%f_1'(z)= \dfrac{1+(1-2 \beta) z}{(1-z)^2}.\label{p4eq4.102}
%\end{align}
%Then $f_1(z)= z + \sum_{n=1}^{\infty} B_n z^n \in \mathcal{F}_1(\beta).$
 For  fixed $n, m =2,3,\ldots$ and $\lambda \in \mathbb{R}$, choose the sequence  $\{z_k\}$ of complex numbers  by  $z_{n-2}= \lambda (1-\beta)a_m$, $ z_{n+m-3}=-(1-\beta)$, $z_k = 0$ for all $ k \neq n-2, n+m-3$.  Then Lemma \ref{p4lem2} yields
\begin{align}
&\left|\big(\lambda n  a_n a_m -(n+m-1) a_{n+m-1}\big)-\big(\lambda (n-1) a_{n-1} a_m -(n+m-2) a_{n+m-2}\big)\right|^2 \notag \\
&\leq |2 \lambda (1- \beta)a_m -  m a_m +(m-1) a_{m-1}|^2 -|m a_m - (m-1) a_{m-1}|^2 + 4 (1- \beta)^2 \notag \\
&= 4 \lambda(1-\beta)\big(\lambda (1-\beta)-m\big)  |a_m|^2 + 4(m-1) \lambda (1-\beta) \RE{ a_m \overline{a_{m-1}}} + 4(1-\beta)^2.\notag
\end{align}
If $\lambda \geq \max{\{m/(1-\beta),m\}}$, then by using  equation \eqref{p4eq4.12} in the above inequality,  we obtain
\begin{align}
&|\big(\lambda n  a_n a_m -(n+m-1) a_{n+m-1}\big)-\big(\lambda (n-1) a_{n-1} a_m -(n+m-2) a_{n+m-2}\big)|^2 \notag \\
&\leq 4(1-\beta)^2 \left(\lambda B_m -1\right)^2.
 \label{p4eq4.1}
\end{align}
For $\lambda \geq \max{\{m/(1-\beta),m\}}$, consider
\begin{align}
&|\lambda n a_n a_{m} - (n+m-1)a_{n+m-1}| \notag \\ &\leq  |\big(\lambda n a_n a_m - (n+m-1) a_{n+m-1}\big)-\big(\lambda(n-1) a_{n-1} a_m - (n+m-2) a_{n+m-2}\big)|+ \cdots  \notag  \\
%&\quad{}+ |\big(\lambda (n-1) a_{n-1} a_m - (n+m-2) a_{n+m-2}\big)-\big(\lambda (n-2) a_{n-2} a_m - (n+m-3) a_{n+m-3}\big)|  + \cdots \notag \\
&\quad{}+  |\big( 2\lambda   a_{2} a_m - (m+1) a_{m+1}\big)-\big(\lambda a_{1} a_m - m a_{m}\big)| + |\lambda a_{1} a_m - m a_{m}| \quad (a_1=1). \notag
\end{align}
By applying the inequality \eqref{p4eq4.1} and the bounds given by \eqref{p4eq4.12} in  the above inequality, we have
\begin{align}
(n+m-1)\left|\dfrac{\lambda n}{n+m-1} a_n a_{m} - a_{n+m-1}\right|
\leq  2(1-\beta)\left(\lambda B_m-1\right)(n-1) +(\lambda-m)B_m. \notag
\end{align}
On substituting $ \mu=\lambda n /(n+m-1)$ in  the above inequality and simplifying, we obtain
\begin{align}
\left|\mu a_n a_{m} - a_{n+m-1}\right|
%&\leq   \dfrac{\mu \big( 1+ 2(n-1)(1-\beta)\big)\big( 1+ 2(m-1)(1-\beta)\big)}{ m n}  - \dfrac{ 1+ 2(n+m-2)(1-\beta)}{n+m-1}, \notag  \\
\leq \mu B_n B_m - B_{n+m-1} \notag
\end{align}
where $\mu \geq \max{\{ nm/\big((n+m-1)(1- \beta)\big), nm/(n+m-1)\}}$ and $B_n$ is given by \eqref{p4eq4.10}. The result is sharp for the function $f_{1,\beta}$ given by \eqref{p4eq4.102}.
\end{proof}
For $\beta=0$ and $m=n$, we have the following.
\begin{corollary}
If $f(z)=z+\sum_{n=2}^{\infty}a_n z^n \in \mathcal{F}_1(0)$ and $\mu \geq n^2/(2n-1)$, then  $$|\mu a_n^2 - a_{2n-1}| \leq \dfrac{\mu (2n-1)^2}{n^2}+\dfrac{3-4n}{2n-1}.$$ The result is sharp.
\end{corollary}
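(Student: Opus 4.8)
The plan is to obtain this statement as a direct specialization of Theorem \ref{p4thm4}, taking $\beta=0$ and $m=n$. The first step is to evaluate the quantity $B_n$ from \eqref{p4eq4.10} at $\beta=0$: it simplifies to $B_n=(2n-1)/n$, so that $B_n^2=(2n-1)^2/n^2$, and, upon replacing the index $n$ by $2n-1$, one gets $B_{2n-1}=\big(2(2n-1)-1\big)/(2n-1)=(4n-3)/(2n-1)$.

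The second step is to check that the hypothesis of Theorem \ref{p4thm4} collapses to the one assumed here. With $m=n$ and $\beta=0$, the two arguments of the maximum, namely $nm/\big((n+m-1)(1-\beta)\big)$ and $nm/(n+m-1)$, both reduce to $n^2/(2n-1)$; hence the condition $\mu\ge\max\{\cdots\}$ becomes simply $\mu\ge n^2/(2n-1)$, exactly as stated. Substituting the computed values into the bound $\mu B_nB_m-B_{n+m-1}$ of Theorem \ref{p4thm4} then gives $\mu(2n-1)^2/n^2-(4n-3)/(2n-1)$, and rewriting the last term as $(3-4n)/(2n-1)$ produces the asserted inequality. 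Sharpness is inherited from the sharpness already established in Theorem \ref{p4thm4}, the extremal function being $f_{1,0}$, i.e.\ the function in \eqref{p4eq4.102} specialized to $\beta=0$.

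I do not anticipate any genuine obstacle, since the whole argument is a routine substitution. The only point that deserves a moment's care is the verification that the two expressions inside the maximum coincide when $\beta=0$ and $m=n$, which guarantees that no additional restriction on $\mu$ is concealed in the hypothesis of Theorem \ref{p4thm4}; once this is confirmed, the stated range $\mu\ge n^2/(2n-1)$ is precisely the one for which the second (here, the only) branch of the theorem applies.
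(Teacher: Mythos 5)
Your proposal is correct and coincides with the paper's (implicit) argument: the corollary is exactly Theorem \ref{p4thm4} specialized to $\beta=0$ and $m=n$, and your computations of $B_n=(2n-1)/n$, $B_{2n-1}=(4n-3)/(2n-1)$, the collapsed hypothesis $\mu\ge n^2/(2n-1)$, and the inherited sharpness via $f_{1,0}$ are all as intended.
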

\begin{theorem}\label{p4thm5}
If  $\mu   \geq  \max{\{ nm/\big((n+m-1)(1- \beta)\big), nm/(n+m-1)\}}$  and $f(z)=z+\sum_{n=2}^{\infty}a_n z^n \in \mathcal{F}_2(\beta)$ $(\beta <1)$, then for all $n,m=2, 3, \ldots$ except when both $n$ and $m$ are even,
\begin{align*}
&\left|\mu a_n a_m - a_{n+m-1}\right|\leq  \mu C_n C_m -C_{n+m-1}
%\dfrac{\mu \big( 1+ (2(n-1)(1-\beta)\big)\big( 1+ (2(m-1)(1-\beta)\big)}{ m n}  - \dfrac{ 1+ (2(n+m-2)(1-\beta)}{n+m-1}.
%\begin{cases}
%\mu (1-\beta)^2+C_{n+m-1}, &\text{if  $n$ and  $m$  both are  even;}\\
%\mu C_n C_m -C_{n+m-1}, &\text{if $n$ and $m$ both are odd;}\\
%\mu \left(1- \dfrac{m-1}{m} \beta\right) \left(1- \dfrac{n-1}{n} \beta\right)- \left(1- \dfrac{n+m-2}{n+m-1} \beta\right), &\text{if $n$ and $m$ both are odd;}\\
%(1-\beta)\left( \mu C_m-1\right), &\text{if $n$ is even and $m$ is  odd;}\\
%(1-\beta)\left( \mu \left(1-\dfrac{m-1}{m} \beta\right)-1\right), &\text{if $n$ is even and $m$ is  odd;}\\
%(1-\beta)\left( \mu C_n-1\right), &\text{if $m$ is even and $n$ is  odd,}
%\end{cases}
\end{align*}
where,  for $n \geq 2$,
\begin{align}
C_n= \begin{cases}
\dfrac{1+(n-1)(1-\beta)}{n}, &\text{if $n$ is odd;}\\
1-\beta, &\text{if $n$ is even.} \label{p4eq4.13}
\end{cases}
\end{align}
The result is  sharp.
\end{theorem}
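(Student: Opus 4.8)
The plan is to follow the template of Theorem \ref{p4thm4} almost verbatim; the one genuinely new feature is that the defining factor $1-z^2$ (in place of $1-z$) links coefficients two indices apart, which forces a step\nobreakdash-two telescoping and the parity split recorded in \eqref{p4eq4.13}. First I would set $g(z):=(1-z^2)f'(z)$, so that $(g(z)-\beta)/(1-\beta)=1+\sum_{n\ge 1}c_nz^n\in\mathcal P$. Comparing coefficients gives the two\nobreakdash-step recurrence $(1-\beta)c_n=(n+1)a_{n+1}-(n-1)a_{n-1}$ (with $a_1=1$ and the $a_0$\nobreakdash-term absent), whence
\[
a_{2k}=\frac{(1-\beta)(c_1+c_3+\cdots+c_{2k-1})}{2k},\qquad a_{2k+1}=\frac{1+(1-\beta)(c_2+c_4+\cdots+c_{2k})}{2k+1}.
\]
Since $|c_n|\le 2$, these yield $|a_n|\le C_n$ with $C_n$ as in \eqref{p4eq4.13}, and the splitting of the odd\nobreakdash- and even\nobreakdash-indexed chains is exactly what produces the two different formulas for $C_n$.

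For the core estimate I would fix $m$ and, exactly as in the proof of Theorem \ref{p4thm4}, apply the Hermitian form of Lemma \ref{p4lem2} to $(g-\beta)/(1-\beta)$ with the finitely supported sequence $z_{n-2}=\lambda(1-\beta)a_m$, $z_{n+m-3}=-(1-\beta)$ and all other $z_k=0$. With this support the form couples $c_{n-1}$ and $c_{n+m-2}$, and after substituting the recurrence its left-hand side becomes $|\Phi(n)-\Phi(n-2)|^2$, where $\Phi(k):=\lambda k\,a_ka_m-(k+m-1)a_{k+m-1}$. The real work is to show that the right-hand side collapses to the perfect square $4(1-\beta)^2(\lambda C_m-1)^2$. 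Bounding $|a_m|\le C_m$ and $\RE(a_m\overline{a_{m-2}})\le C_mC_{m-2}$ (legitimate once $\lambda\ge m/(1-\beta)$ makes the coefficient of $|a_m|^2$ nonnegative, and with the $a_{m-2}$\nobreakdash-term simply dropping out when $m=2$) reduces everything to the single algebraic identity $mC_m-(m-2)C_{m-2}=2(1-\beta)$, which I would verify holds for both parities of $m$. This verification is the main obstacle: it is precisely what dictates the shape of $C_n$ in \eqref{p4eq4.13}, and everything downstream is bookkeeping. The outcome is the step inequality $|\Phi(n)-\Phi(n-2)|\le 2(1-\beta)(\lambda C_m-1)$.

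Finally I would exploit the symmetry of both $|\mu a_na_m-a_{n+m-1}|$ and the claimed bound $\mu C_nC_m-C_{n+m-1}$ under $n\leftrightarrow m$: since $n,m$ are not both even, at least one index is odd, and after relabelling I may assume $n$ is odd (the $\mu$\nobreakdash-hypothesis is itself symmetric, so it is preserved). Then telescoping $\Phi(n)$ downwards in steps of two over the $(n-1)/2$ differences terminates at $\Phi(1)=(\lambda-m)a_m$, which the hypothesis $\lambda\ge m$ bounds by $(\lambda-m)C_m$; summing the step estimates and substituting $\mu=\lambda n/(n+m-1)$, the total simplifies—using $nC_n=1+(n-1)(1-\beta)$ for odd $n$ together with the two possible forms of $C_{n+m-1}$—to exactly $(n+m-1)(\mu C_nC_m-C_{n+m-1})$. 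This is also where the excluded case becomes visible: were $n$ and $m$ both even, the step\nobreakdash-two telescoping would stall at $\Phi(2)=2\lambda a_2a_m-(m+1)a_{m+1}$, for which the present scheme supplies no clean base estimate. Sharpness is then confirmed for $f_{2,\beta}$, whose associated $c_n$ are forced to the extremal values that turn every inequality above into an equality.
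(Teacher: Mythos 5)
Your derivation of the inequality is essentially the paper's own argument: same reduction to $(g-\beta)/(1-\beta)\in\mathcal P$, same coefficient bounds $|a_n|\le C_n$, same choice of sequence in Lemma \ref{p4lem2}, and the same step-two telescoping; the identity $mC_m-(m-2)C_{m-2}=2(1-\beta)$ you isolate is indeed the computation that makes the step bound collapse to $2(1-\beta)(\lambda C_m-1)$. Your one real deviation is the reduction, by the $n\leftrightarrow m$ symmetry of the functional, the bound and the hypothesis on $\mu$, to the case $n$ odd. That is legitimate and actually tidier than the paper, which instead keeps $m$ fixed and splits into three cases ($n=2$ with $m$ odd, $n>2$ even with $m$ odd, and $n$ odd), requiring the separate base estimate \eqref{p4eq5.03} at $\Phi(2)$; your relabelling always terminates the telescoping at $\Phi(1)=(\lambda-m)a_m$ and makes that extra computation unnecessary.

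There is, however, a concrete error in your sharpness claim. The function $f_{2,\beta}(z)=z(1-\beta)/(1-z^2)+(\beta/2)\log\bigl((1+z)/(1-z)\bigr)$ has $a_n=C_n$ only for odd $n$; its even-indexed coefficients vanish. So when exactly one of $n,m$ is even (the mixed-parity case, which your theorem must cover), $f_{2,\beta}$ gives $\mu a_na_m-a_{n+m-1}=0$ and does not attain $\mu C_nC_m-C_{n+m-1}$. The paper instead exhibits the extremal function $f_3(z)=z(1-\beta)/(1-z)+(\beta/2)\log\bigl((1+z)/(1-z)\bigr)$ of \eqref{p4eq4.15}, which lies in $\mathcal F_2(\beta)$ because $(1-z^2)f_3'(z)=(1-\beta)(1+z)/(1-z)+\beta$, and whose coefficients equal $C_n$ for \emph{every} $n\ge 2$; this is the function that saturates all the inequalities in both parity configurations. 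Replace $f_{2,\beta}$ by $f_3$ and your argument is complete.
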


\begin{proof}
Let $g(z):=(1-z^2) f'(z)$. Since $f \in \mathcal{F}_2(\beta)$, therefore $(g(z)-\beta)/(1-\beta)= p(z)$ for some $p(z)= 1 + \sum_{n=1}^{\infty} c_n z^n \in \mathcal{P}.$
This gives
\begin{align}
c_n&= \dfrac{(n+1) a_{n+1} - (n-1) a_{n-1}}{1-\beta}, \notag \\
a_{2k}&= \dfrac{(1-\beta)(c_1 + c_3 + \cdots + c_{2k-1})}{2k}\label{p4eq4.01}\intertext{and}
a_{2k+1}&= \dfrac{1 + (1-\beta)( c_2 + c_4 + \cdots + c_{2k})}{2 k +1}. \label{p4eq5}
\end{align}
Since $|c_n| \leq 2$ $(n \geq 1)$, the equations \eqref{p4eq4.01} and \eqref{p4eq5} give
\begin{align}
|a_n| \leq C_n \label{p4eq4.14}
\end{align}
for all $n \geq 2,$ where $C_n$ is given by the equation \eqref{p4eq4.13}.
Define a function  $f_3: \mathbb{D} \to \mathbb{C}$ by
\begin{align}
 f_3(z)&=\dfrac{z(1-\beta)}{1-z}+ \dfrac{\beta}{2}\log{\dfrac{1+z}{1-z}}\notag \\
 &= z+ C_2 z^2 + C_3 z^3 + C_4 z^4 +  C_5 z^5 + \cdots. \label{p4eq4.15}
\end{align}
Clearly, the bounds given in \eqref{p4eq4.14} are  sharp for the function $f_3$.

For  fixed $n, m =2,3, \ldots$ and $\lambda \in \mathbb{R}$, choose the sequence  $\{z_k\}$ of complex numbers  by  $z_{n-2}= \lambda (1-\beta)a_m$, $ z_{n+m-3}=-(1-\beta)$, $z_k = 0$ for all $ k \neq n-2, n+m-3$.  Then Lemma \ref{p4lem2} yields
\begin{align}
&\left|\big(\lambda n  a_n a_m -(n+m-1) a_{n+m-1}\big)-\big(\lambda (n-2) a_{n-2} a_m -(n+m-3) a_{n+m-3}\big)\right|^2 \notag \\
&\leq |2 \lambda (1- \beta)a_m -  m a_m +(m-2) a_{m-2}|^2 -|m a_m - (m-2) a_{m-2}|^2 + 4 (1- \beta)^2 \notag \\
&= 4 \lambda(1-\beta)\big(\lambda (1-\beta)-m\big)  |a_m|^2 + 4(m-2) \lambda (1-\beta) \RE{ a_m \overline{a_{m-2}}} + 4(1-\beta)^2.\notag
\end{align}
If $\lambda \geq \max{\{m/(1-\beta),m\}}$, then an application of the equation \eqref{p4eq4.14} in the previous inequality gives
\begin{align}
&\left|\big(\lambda n  a_n a_m -(n+m-1) a_{n+m-1}\big)-\big(\lambda (n-2) a_{n-2} a_m -(n+m-3) a_{n+m-3}\big)\right| \notag \\
&\leq 2(1-\beta)\left( \lambda C_m-1\right). \label{p4eq5.1}
\end{align}
If $n=2$ and $m=2k+1$ $(k \geq 1)$, then
\begin{align}
|2 \lambda a_2 a_{m} - (m+1)a_{m+1}| &= |2 \lambda a_2 a_{2k+1} - (2k+2) a_{2k+2}|\notag\\
&= \left|\dfrac{\lambda (1-\beta)}{2k+1} c_1 \left(1 + (1-\beta)\sum_{j=1}^{k} c_{2j}\right) - (1-\beta) \sum_{j=1}^{k+1}c_{2j-1} \right|\notag \\
&= \left|\left(\dfrac{\lambda}{m}-1\right) (1-\beta)c_1 +  (1-\beta)\sum_{j=1}^{k} \left(\dfrac{\lambda(1-\beta)}{m}c_1 c_{2j} -c_{2 j+1}\right) \right|.\notag
\end{align}
For $\lambda \geq \max{\{m/(1-\beta),m\}}$, an application  of Lemma \ref{p4lem1} in the above equation gives
\begin{align}
|2\lambda a_2 a_{m} - (m+1)a_{m+1}| &\leq (1-\beta)\big(2 \lambda C_m -(1+m) \big)\label{p4eq5.03}
\end{align}
 where $C_m$ is given by the equation \eqref{p4eq4.13}.
If  $ n>2$ is even and $m$ is odd, then
\begin{align}
&|\lambda n a_n a_{m} -(n+m-1) a_{n+m-1}| \notag \\
%&\leq  \sum_{j=0}^{n-4}|\big(\lambda (n-j) a_{n-j} a_m - (n+m-1-j)a_{n+m-1-j}\big)-\big(\lambda \big(n-(j+2)\big) a_{\big(n-(j+2)\big)} a_m - (n+m-5-j) a_{n+m-5-j}\big)|  \notag  \\
&\leq  |\big(\lambda n a_n a_m - (n+m-1)a_{n+m-1}\big)-\big(\lambda (n-2) a_{n-2} a_m - (n+m-3) a_{n+m-3}\big)| + \cdots \notag  \\
%&\quad{}+ |\big((n-2)\lambda a_{n-2} a_m -(n+m-3) a_{n+m-3}\big)-\big((n-4)\lambda a_{n-4} a_m -(n+m-5) a_{n+m-5}\big)|  + \cdots \notag \\
&\quad{} +  |\big(4 \lambda a_{4} a_m -(m+3) a_{m+3}\big)-\big(2 \lambda a_{2} a_m - (m+1) a_{m+1}\big)| + |2 \lambda a_{2} a_m - (m+1)a_{m+1}|.\notag
\end{align}
For $\lambda \geq \max{\{m/(1-\beta),m\}}$, in view of \eqref{p4eq5.1} and \eqref{p4eq5.03}, we have
\begin{align*}
(n+m-1)\left|\dfrac{\lambda n }{n+m-1}a_n a_{m} -  a_{n+m-1}\right| \leq  (1-\beta)( \lambda  n C_m -n-m+1).
\end{align*}
On substituting $ \mu=\lambda n /(n+m-1)$ in the above inequality and simplifying, we obtain
\begin{align}
\left|\mu a_n a_{m} - a_{n+m-1}\right| \leq   (1-\beta)(\mu C_m-1) \notag
\end{align}
where $\mu \geq \max{\{ nm/\big((n+m-1)(1- \beta)\big), nm/(n+m-1)\}}.$
Next, we  consider the case when  $n$ is odd. In this case, we have
\begin{align}
&|\lambda n a_n a_{m} -(n+m-1) a_{n+m-1}| \notag \\
&\leq  |\big(\lambda n a_n a_m - (n+m-1)a_{n+m-1}\big)-\big(\lambda (n-2) a_{n-2} a_m - (n+m-3) a_{n+m-3}\big)| + \cdots \notag  \\
%&\quad{}+ |\big((n-2)\lambda a_{n-2} a_m -(n+m-3) a_{n+m-3}\big)-\big((n-4)\lambda a_{n-4} a_m -(n+m-5) a_{n+m-5}\big)|  + \cdots \notag \\
&\quad{} +  |\big(3 \lambda a_{3} a_m -(m+2) a_{m+2}\big)-\big( \lambda a_{1} a_m - m a_{m}\big)| + |\lambda a_{1} a_m - m a_{m}|\quad (a_1=1).\notag
\end{align}
For $\lambda \geq  \max{\{m/(1-\beta),m\}}$, use of the  equation \eqref{p4eq5.1} and  the bound of $|a_m|$ in the above inequality give
\begin{align*}
(n+m-1)\left|\dfrac{\lambda n}{n+m-1} a_n a_{m} - a_{n+m-1}\right|
\leq (n-1)(1-\beta)\left(\lambda C_m-1\right)+(\lambda -m)C_m
\end{align*}
where $C_m$ is given by  the equation \eqref{p4eq4.13}.
Substitution of $\mu=\lambda n/(n+m-1)$ in the previous inequality and simplification give
\begin{align*}
\left|\mu a_n a_{m} - a_{n+m-1}\right|
\leq  \mu C_n C_m -C_{n+m-1},
\end{align*}
where $\mu \geq \max{\{ nm/\big((n+m-1)(1- \beta)\big), nm/(n+m-1)\}}$.
The  sharpness  follows for the function $f_3$ given by the equation \eqref{p4eq4.15}.
\end{proof}
For $\beta=0$ and $m=n=2k+1$ $(k \geq 1)$, we have the following.
\begin{corollary}
If $f(z)=z+\sum_{n=2}^{\infty}a_n z^n \in \mathcal{F}_2(0)$ and $\mu \geq (2k+1)^2/(4k+1)$, then   $$|\mu a_{2 k+1}^2 - a_{4k+1}| \leq \mu-1  \quad  (k \geq 1). $$
The result is sharp.
\end{corollary}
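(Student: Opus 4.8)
The plan is to obtain this statement as a direct specialization of Theorem~\ref{p4thm5}, taking $\beta = 0$ and $n = m = 2k+1$. First I would verify that the hypotheses of the theorem are met: since $n = m = 2k+1$ is odd for every $k \geq 1$, the excluded case (both $n$ and $m$ even) does not arise, so Theorem~\ref{p4thm5} applies directly. I would also note in passing that the extremal function $f_3$ of \eqref{p4eq4.15} is a genuine member of $\mathcal{F}_2(0)$, since at $\beta=0$ one has $f_3(z)=z/(1-z)$ and $(1-z^2)f_3'(z)=(1+z)/(1-z)$, whose real part is positive on $\mathbb{D}$.

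Next I would evaluate the constants appearing in the theorem at $\beta = 0$. From \eqref{p4eq4.13}, setting $\beta = 0$ gives $C_n = \big(1 + (n-1)\big)/n = 1$ for odd $n$ and $C_n = 1$ for even $n$; hence $C_n = 1$ for all $n \geq 2$. In particular $C_n = C_m = C_{n+m-1} = 1$, where $n + m - 1 = 4k+1$. Moreover the admissibility threshold $\max\{nm/((n+m-1)(1-\beta)),\, nm/(n+m-1)\}$ collapses at $\beta = 0$ to the single value $nm/(n+m-1) = (2k+1)^2/(4k+1)$, which matches the hypothesis $\mu \geq (2k+1)^2/(4k+1)$. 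Substituting these into the conclusion of Theorem~\ref{p4thm5} yields
$$\left|\mu a_{2k+1}^2 - a_{4k+1}\right| \leq \mu\, C_n C_m - C_{n+m-1} = \mu - 1,$$
which is precisely the desired bound. Since the argument is pure substitution, there is essentially no obstacle here; the only point needing care is confirming that the parity exclusion of Theorem~\ref{p4thm5} is inactive, which it is because both indices are odd.

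Finally, for sharpness I would invoke the extremal function $f_3$ of \eqref{p4eq4.15} at $\beta = 0$, which reduces to $f_3(z) = z/(1-z) = \sum_{n \geq 1} z^n$, so that $a_n = 1$ for all $n$. Then $\mu a_{2k+1}^2 - a_{4k+1} = \mu - 1$. Because the threshold satisfies $(2k+1)^2/(4k+1) = 1 + 4k^2/(4k+1) > 1$ for every $k \geq 1$, we have $\mu > 1$, so $|\mu - 1| = \mu - 1$ and the bound is attained. This establishes sharpness and completes the deduction.
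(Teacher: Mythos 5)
Your proposal is correct and matches the paper exactly: the corollary is obtained there as the direct specialization of Theorem~\ref{p4thm5} to $\beta=0$ and $n=m=2k+1$, with sharpness from $f_3(z)=z/(1-z)$. Your verification that $C_n\equiv 1$, that the threshold reduces to $(2k+1)^2/(4k+1)$, and that $\mu-1>0$ so the extremal value is attained, is precisely the intended (and routine) substitution.
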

\bibliography{ref4p}
\bibliographystyle{siam}

\end{document}